\numberwithin{equation}{section}
\DeclareMathOperator*{\proj}{proj}
\DeclareMathOperator*{\dom}{dom\,}
\DeclareMathOperator*{\gph}{gph\,}
\newcommand{\inclu}[0] {\ar@{^{(}->}}
\newcommand{\cA}{\mathcal{A}}
\newcommand{\too}{\rightrightarrows}
\newcommand{\E}{{\R}^n}
\newcommand{\conv}{{\rm conv\,}}
\newcommand{\dist}{{\rm dist}}
\newcommand{\R}{{\bf R}}
\newcommand{\argmin}{\operatornamewithlimits{argmin}}
\newcommand{\LS}{\operatornamewithlimits{Limsup}}
\newcommand{\ls}{\operatornamewithlimits{limsup}}
\newtheorem{theorem}{Theorem}[section]
\newtheorem{lemma}[theorem]{Lemma}
\newtheorem{corollary}[theorem]{Corollary}
\newtheorem{thm}{Theorem}[section]
\newtheorem{assumption}{Assumption}
\newtheorem{condition}{Conditions}
\DeclarePairedDelimiter{\dotp}{\langle}{\rangle}
\title{Conservative and semismooth derivatives are equivalent for semialgebraic maps}
\author{Damek Davis\thanks{School of ORIE, Cornell University,
		Ithaca, NY 14850, USA;
		\texttt{people.orie.cornell.edu/dsd95/}. Research of Davis supported by an Alfred P. Sloan research fellowship and NSF DMS award 2047637.}\qquad Dmitriy Drusvyatskiy\thanks{Department of Mathematics, U. Washington,
		Seattle, WA 98195; \texttt{www.math.washington.edu/{\raise.17ex\hbox{$\scriptstyle\sim$}}ddrusv}. Research of Drusvyatskiy was supported by the NSF DMS 1651851 and CCF 1740551 awards.}}
\begin{document}
\date{}
\maketitle

\abstract

Subgradient and Newton  algorithms for nonsmooth optimization require generalized derivatives to satisfy subtle approximation properties: conservativity for the former and semismoothness for the latter. Though these two properties originate in entirely different contexts, we show that in the semi-algebraic setting they are equivalent. Both properties for a generalized derivative simply require it to coincide with the standard directional derivative on the tangent spaces of some partition of the domain into smooth manifolds. An appealing  byproduct is a new short  proof that semi-algebraic maps are semismooth relative to the Clarke Jacobian.

\section{Introduction}
Algorithms for nonsmooth optimization, such as subgradient and semi-smooth Newton methods, crucially rely on first-order approximations of Lipschitz maps. Two seemingly disparate first-order approximation properties underlie existing results. 

\paragraph{Conservativity.}{
In full generality, Lipschitz continuous functions can be highly pathological, resulting in failure of subgradient methods to find any critical points \cite{daniilidis2020pathological}. 	 Consequently, it is essential to limit the class of functions under consideration.	
With this in mind, recent analysis of subgradient methods \cite{bolte2020conservative,davis2020stochastic,majewski2018analysis} requires \emph{conservativity} of a {generalized gradient mapping} $G(x)$ for $f$. This property, axiomatically explored by Bolte and Pauwels \cite{bolte2020conservative}, stipulates the validity of a formal chain rule
$$
\frac{d}{dt} (f \circ \gamma)(t) = \dotp{G(\gamma(t)), \dot \gamma(t)} \qquad \text{for a.e. $t \in (0, 1)$},
$$
along any absolutely continuous curve $\gamma$. 
Most importantly, conservativity holds automatically when $f$ is semialgebraic and $G$ is the {Clarke subdifferential} $\partial_C f$~\cite{davis2020stochastic}. More generally, Bolte and Pauwels~\cite{bolte2020conservative,bolte2020mathematical} showed that conservativity holds when $G$ is an output of an automatic differentiation scheme on a composition of semialgebraic functions.}

\paragraph{Semismoothness.}{
Newton methods for solving nonsmooth equations  $F(x)=0$, for a Lipschitz map $F$ require a \emph{semismoothness} property of $F$ with respect to a generalized Jacobian mapping $G(x)$. This property, introduced by Mifflin \cite{mifflin1977semismooth} and explored for Newton methods by Qi and Sun \cite{qi1993nonsmooth}, stipulates the estimate:
$$
F(y) - F(x) - G(y)(y - x) \subset o(\|x - y\|)\mathbb{B} \qquad \textrm{as } y \to x,$$
at any point $x \in \E$. Bolte, Daniilidis and Lewis~\cite{bolte2009tame} famously showed that semismoothness holds  when $F$ is semialgebraic and $G=\partial_C F$ is the Clarke Jacobian. }

\bigskip

Seemingly disparate, there is reason to believe conservativity and semismoothness are closely related. For example, Norkin's seminal work~\cite{norkin1980generalized,norkin1986stochastic} analyzed subgradient methods on functions $f$ that are semismooth with respect to certain generalized subdifferential operators. On the other hand, Ruszczy{\'n}ski~\cite{ruszczynski2020convergence} recently showed that semismoothness implies a weaker notion of conservativity along semismooth curves. 
In this paper, we prove that the notions of conservativity and semismoothness are equivalent for semialgebraic maps. 
Our proof strategy is to relate the two properties to an intermediate ``stratified derivative'' condition, already known from \cite{lewis2021structure} to be equivalent to conservativity. The end result applies to a natural family of (set-valued) directional derivatives $D(x, u)$, akin to $G(x) u$. In particular, $D(x, u)$ could be a directional derivative, the map induced by the coordinate-wise Clarke Jacobian, or the output of an automatic differentiation procedure.
\begin{theorem}[Informal]
Suppose $F(\cdot)$ and $D(\cdot,\cdot)$ are semialgebraic. Then conservativity and semismoothness  are both equivalent to the following condition: 
there exists a partition of $\R^d$ into finitely many semialgebraic $C^1$ manifolds such that for any point $x$ lying in a manifold $M$ equality holds:
$$D(x,u)=\{F'(x,u)\}\qquad \forall u\in T_M(x),$$ 
\end{theorem}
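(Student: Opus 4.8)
\emph{Strategy.} The plan is to route everything through the intermediate \emph{stratified derivative condition} displayed in the theorem; call a finite semialgebraic $C^1$ partition of $\R^d$ with that property \emph{$F$-adapted for $D$}. The equivalence of conservativity with the existence of an $F$-adapted partition is already available, being essentially the content of \cite{lewis2021structure}, which characterizes conservative set-valued fields through exactly such a stratified chain rule. So the entire theorem reduces to showing that semismoothness of $D$ for $F$ is \emph{also} equivalent to the existence of an $F$-adapted partition, which I will prove as a two-way implication. Throughout I use that $F$ is locally Lipschitz (being semialgebraic), so that the directional derivative $F'(x,\cdot)=\lim_{s\downarrow 0}(F(x+s\,\cdot)-F(x))/s$ exists everywhere, is positively homogeneous and Lipschitz, and, on any stratum $M$ on which $F|_M$ is $C^1$, satisfies $F'(x,u)=\nabla(F|_M)(x)u$ for $u\in T_M(x)$ (since $\dist(x+su,M)=o(s)$ for tangent $u$); and that $D(x,\cdot)$ is positively homogeneous, locally bounded, and Lipschitz in direction locally uniformly in $x$, with semialgebraic graph---true for directional derivatives, coordinatewise Clarke Jacobians, and automatic-differentiation outputs.

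\emph{$F$-adapted partition $\Rightarrow$ semismoothness.} Fix a base point $x$ and set $\Psi(r)=\sup\{\|F(y)-F(x)-v\|\,/\,\|y-x\|:\ 0<\|y-x\|\le r,\ v\in D(y,y-x)\}$. Since $F$ is Lipschitz and $D$ locally bounded, $\Psi$ is a bounded semialgebraic function of $r>0$ and hence has a limit $L\ge 0$ as $r\downarrow 0$; I must show $L=0$. If $L>0$, the semialgebraic curve selection lemma yields an arc $r\mapsto(y(r),v(r))$ with $y(r)\to x$, $v(r)\in D(y(r),y(r)-x)$, and $\|F(y(r))-F(x)-v(r)\|\ge\tfrac{L}{2}\|y(r)-x\|$; after a monotone reparametrization $t\mapsto\gamma(t)$ is $C^1$ with $\gamma(0)=x$, lying for small $t>0$ in a single stratum $M'$, and $x$ lies in some stratum $M\subseteq\overline{M'}$. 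With $d=\dot\gamma(0)$, the $C^1$-ness of $F|_{M'}$ gives $F(\gamma(t))-F(x)=\int_0^t\langle\nabla(F|_{M'})(\gamma(s)),\dot\gamma(s)\rangle\,ds=\ell t+o(t)$, where $\ell=\lim_{t\downarrow 0}\langle\nabla(F|_{M'})(\gamma(t)),\dot\gamma(t)\rangle$ exists by semialgebraicity and boundedness. Decompose the secant $\gamma(t)-x=w_T(t)+w_N(t)$ with $w_T(t)\in T_{M'}(\gamma(t))$: since $\dot\gamma(t)\in T_{M'}(\gamma(t))$ and $\dot\gamma(t)\to d$, one has $\dist(d,T_{M'}(\gamma(t)))\to 0$, hence $\|w_N(t)\|\le\dist(dt,T_{M'}(\gamma(t)))+\|\gamma(t)-x-dt\|=o(t)$ and $w_T(t)=dt+o(t)$. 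The stratified condition at $\gamma(t)$ applied to $w_T(t)\in T_{M'}(\gamma(t))$ gives $D(\gamma(t),w_T(t))=\{\nabla(F|_{M'})(\gamma(t))w_T(t)\}=\{\ell t+o(t)\}$, so Lipschitzness of $D$ in direction yields $D(\gamma(t),\gamma(t)-x)\subseteq\{\ell t+o(t)\}+o(t)\mathbb{B}$; thus every $v\in D(\gamma(t),\gamma(t)-x)$ obeys $\|F(\gamma(t))-F(x)-v\|=o(t)=o(\|\gamma(t)-x\|)$, contradicting the lower bound. Hence $L=0$. Running this with $D=\partial_C F$, once one knows the Clarke Jacobian satisfies the stratified condition on a suitable (e.g.\ Whitney $(a)$-regular) stratification, is the promised short proof that semialgebraic maps are semismooth relative to $\partial_C F$.

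\emph{Semismoothness $\Rightarrow$ $F$-adapted partition.} Choose a semialgebraic $C^1$ partition of $\R^d$ refining the domain strata of $D$ so that $F$ is $C^1$ on each stratum $M$ and, over each $M$, the map $(x,u)\mapsto D(x,u)$ is Hausdorff continuous on the tangent bundle $\{(x,u):x\in M,\ u\in T_M(x)\}$; this is possible by the generic-continuity theorem for semialgebraic set-valued maps applied to the (semialgebraic) restriction of the graph of $D$ to these tangent bundles. Fix $x\in M$, $u\in T_M(x)$ with $\|u\|=1$, and a $C^1$ semialgebraic arc $\gamma$ in $M$ with $\gamma(0)=x$, $\dot\gamma(0)=u$. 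Then $F(\gamma(t))-F(x)=F'(x,u)t+o(t)$, and decomposing $\gamma(t)-x=w_T(t)+w_N(t)$ as above gives $w_T(t)\in T_M(\gamma(t))$, $w_T(t)=ut+o(t)$, $\|w_N(t)\|=o(t)$. Semismoothness at $x$ with $y=\gamma(t)$---which in the form stated requires the whole set $F(\gamma(t))-F(x)-D(\gamma(t),\gamma(t)-x)$ to lie in $o(t)\mathbb{B}$---forces $D(\gamma(t),\gamma(t)-x)\subseteq F'(x,u)t+o(t)\mathbb{B}$, hence (Lipschitz in direction, then homogeneity) $D\big(\gamma(t),w_T(t)/\|w_T(t)\|\big)\subseteq F'(x,u)+o(1)\mathbb{B}$. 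Given any $v\in D(x,u)$, Hausdorff continuity of $D$ on the tangent bundle of $M$ (valid since $\gamma(t)\to x$ and $w_T(t)/\|w_T(t)\|\to u$ with both arguments in that bundle) produces $v_t\in D(\gamma(t),w_T(t)/\|w_T(t)\|)$ with $v_t\to v$; but $v_t\to F'(x,u)$, so $v=F'(x,u)$. As $D$ is nonempty-valued, $D(x,u)=\{F'(x,u)\}$, i.e.\ the partition is $F$-adapted.

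\emph{Main obstacle.} The conceptual crux in both directions is that the secant $y-x$ is generally \emph{not} tangent to the stratum through $y$, so the stratified identity cannot be applied to it directly; the remedy is the estimate $\dist(\gamma(t)-x,T_{M'}(\gamma(t)))=o(t)$, which rests on the elementary observation that $\dot\gamma(t)$---which \emph{is} tangent---converges to the secant direction $d$. The technical engine is the passage from ``the estimate holds along every semialgebraic arc through $x$'' to ``the estimate holds as $y\to x$ unrestrictedly,'' effected by the limit-existence theorem for one-variable semialgebraic functions together with curve selection; this is where semialgebraicity is genuinely used, and where the argument would fail for merely tame Lipschitz maps without additional regularity. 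A minor point needing care is assembling a single partition that simultaneously makes $F$ stratified-$C^1$, renders $D$ Hausdorff continuous on all tangent bundles, and refines the partition required by \cite{lewis2021structure}---routine, since these are generic properties and finitely many semialgebraic partitions admit a common refinement.
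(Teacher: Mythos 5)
Your proposal is correct in substance and follows the same architecture as the paper: both route the equivalence through the stratified derivative condition \ref{eqn:strat_der}, delegate its equivalence with conservativity to \cite{lewis2021structure}, and prove the two remaining implications by (i) curve selection plus the one-variable limit lemma (Lemma~\ref{lem:semi_smooth_uni}) for ``stratified $\Rightarrow$ semismooth,'' and (ii) generic smoothness, generic continuity of $D$, and Lipschitzness in the direction argument for ``semismooth $\Rightarrow$ stratified.'' The genuine differences are local. In the first leg you go directly from \ref{eqn:strat_der} to semismoothness by splitting the secant $\gamma(t)-x$ into tangent and normal components and estimating $\|w_N(t)\|=o(t)$ via $\dot\gamma(t)\to\dot\gamma(0)$; the paper instead passes through conservativity along the selected curve itself, replacing $v(t)/t\in D(d(t),d(t)/t)$ by a nearby element of $D(d(t),\dot d(t))$ and invoking the chain rule $(F\circ d)'(t)\in D(d(t),\dot d(t))$. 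The two devices are interchangeable (your tangency estimate is exactly what makes the curve-level chain rule available), and yours is arguably more self-contained since it never needs the conservative formulation. You also bypass condition \ref{it:clarke_strat} and the projection formula entirely, and you treat only one of the two semismoothness variants \ref{eqn:NewtI}/\ref{eqn:NewtII} (the paper handles the other via the substitution $\hat D(x,u)=-D(x,-u)$); both omissions are harmless for the informal statement.

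Two points need repair. First, the parenthetical ``$F$ is locally Lipschitz (being semialgebraic)'' is not a valid inference ($\sqrt{|x|}$ is semialgebraic); local Lipschitz continuity is a standing hypothesis, not a consequence. Second, and more substantively, your construction of a partition on which $(x,u)\mapsto D(x,u)$ is Hausdorff continuous ``on the tangent bundle of each stratum'' does not follow from the generic-continuity theorem as you invoke it: applying that theorem to $D$ on $\R^{2n}$ (or to its restriction to a tangent bundle) yields continuity only along the pieces of a partition of the bundle, whereas your argument needs inner semicontinuity along the specific sequence $(\gamma(t),w_T(t)/\|w_T(t)\|)$, which may hop between such pieces. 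The paper sidesteps this by applying generic inner semicontinuity to the base map $x\mapsto \gph D(x,\cdot)$ on $\R^n$, obtaining $(u_t,v_t)\to(u,v)$ with $v_t\in D(\gamma(t),u_t)$ and no tangency constraint on $u_t$, and then using Lipschitzness of $D(\gamma(t),\cdot)$ to move $u_t$ to the secant direction. Substituting that device for your Hausdorff-continuity claim closes the gap without changing anything else in your argument.
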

%{\color{red} Partitions always exist for Clarke subdifferentials.}

Thus both conservativity and semismoothness hold if and only if $D(x,u)$ coincides with the standard directional derivative $F'(\cdot,\cdot)$ on the tangent spaces of some partition of $\R^n$ into finitely many smooth manifolds. 
In dual terms, this means that semismooth generalized derivatives are simply selections of the map $(x,u)\to J(x)u$ where the rows of $J(x)$ are Clarke subgradients of $F_i$ shifted by the normal space. Building on \cite{bolte2020conservative}, the authors of \cite{lewis2021structure} recently showed that the latter property is equivalent to  $J(x)$ being conservative. Summarizing, the results of our paper together with those already available in the literature \cite{lewis2021structure,bolte2020conservative} imply that semismooth directional derivatives, stratified directional derivatives/subgradients, and conservative set-valued vector fields are  equivalent in the semi-algebraic setting.

\section{Notation and preliminaries}
In this section, we record basic notation and preliminaries from variational analysis and semi-algebraic geometry that will be used in the paper.

\subsection{Variational analysis}
We follow standard notation of variational analysis, as set out for example in the monographs \cite{RW98,Mord_1,kk_book,CLSW,penot_book,ioffe_book}.
Throughout,  fix a Euclidean $n$-dimension space, denoted by $\E$, equipped with an inner product $\langle \cdot,\cdot\rangle$ and the induced norm $\|x\|=\sqrt{\langle x,x\rangle}$. A set-valued map $G\colon\E\to\R^m$, maps points to subsets $G(x)\subset\R^m$. The domain and graph of $G$, respectively,  are defined by 
$$\dom G=\{x\in\E: G(x)\neq \emptyset\}\qquad \textrm{and}\qquad \gph G=\{(x,y)\in \E\times\R^m: y\in G(x)\}.$$
The set $\LS_{x\to \bar x} G(x)$ consists of all limits of sequences $y_i\in G(x_i)$ where $x_i$ is any sequence converging to $\bar x$.
The map $G$ is called outer-semicontinuous if $\gph G$ is a closed set. We say that $G$ is inner-semicontinuous on a set $Q\subset\E$ if for any  points $x\in Q$ and $y\in G(x)$ and for any sequence $x_i \xrightarrow[]{Q} x$, there exist points $y_i\in G(x_i)$ converging to $y$. 

The distance function and the projection onto a set $X\subset\E$ are defined by:
$$\dist(y,X):=\inf_{x\in X}~\|y-x\|, \qquad \proj(y,X):=\argmin_{x\in X}~\|y-x\|.$$
The deviation between sets $X,Y\subset\E$ will be measured by the Hausdorff distance
$$\dist(X,Y):=\max\left\{\sup_{x\in X}\dist(x,Y),\sup_{y\in Y}\dist(y,X)\right\}.$$
The directional derivative of any map $F\colon\E\to\R^m$ is defined by 
\begin{equation}\label{eqn:dir_deriv}
F'(x,u):=\lim_{t\searrow 0}~ \frac{F(x+tu)-F(x)}{t}.
\end{equation}
The map $F$ is called directionally differentiable if $F'(x,u)$ is well-defined, meaning the limit exists in \eqref{eqn:dir_deriv} for every $x, u\in \E$. It is straightforward to verify that when $F$ is locally Lipschitz continuous  and directionally differentiable, equality holds:
$$F'(x,u)=\lim_{t\searrow 0,~v\to u} \frac{F(x+tv)-F(x)}{t}$$
Abusing notation, for any curve $\gamma\colon [0,1)\to \R^n$, we define the one-sided velocity 
$\gamma'(0):=\lim_{t\searrow 0}\frac{\gamma(t)-\gamma(0)}{t}$ and we let $\dot{\gamma}(t)$ denote the derivative of $\gamma$ at any point $t\in (0,1)$ where $\gamma$ is differentiable.

Given any locally Lipschitz continuous map $F\colon\E\too\R^m$, the Clarke Jacobian of $F$ at $x$ is the set 
$$\partial_C F(x)=\conv\left\{\lim_{i\to\infty} \nabla F(x_i): x_i\xrightarrow[]{\Omega}  x\right\},$$
where $\Omega$ is the set of points at which $F$ is differentiable and $\conv(\cdot)$ denotes the convex hull.
Finally, for any $C^1$ manifold $M\subset\E$ and a point $x\in M$, the symbols $T_M(x)$ and $N_M(x)$ will denote the tangent and normal spaces to $M$ at $x$.

\subsection{Semialgebraic geometry}
We next collect a few elementary facts from semialgebraic geometry. For details we refer the reader to \cite{Coste-semi,Coste-min,Dries-Miller96}. All results in the paper hold more generally, and with identical proofs, for sets and functions definable in an o-minimal structure. We focus on the semialgebraic setting only for simplicity.

A set $Q\subset\E$ is called semialgebraic if it can be written as a union of finitely many sets defined by finitely many polynomial inequalities. A set-valued map $G\colon\E\too\R^m$ is called semi-algebraic if its graph is a semialgebraic set. Univariate semialgebraic functions are particularly simple.

\begin{lemma}[Curves]\label{lem:semi_smooth_uni}
For any semialgebraic map $\gamma\colon[0,1]\to \E$, there exists $\epsilon>0$ such that $\gamma$ is $C^1$-smooth on the open interval $(0,\epsilon)$. Moreover, as  long as the quotients $t^{-1}(\gamma(t)-\gamma(0))$ are bounded for all small $t>0$, the derivative $\gamma'(0)$ exists and equality $\gamma'(0)=\lim_{t\searrow 0} \dot{\gamma}(t)$ holds.\end{lemma}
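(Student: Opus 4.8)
The plan is to invoke the cell decomposition theorem for semialgebraic maps. Specifically, I would apply the semialgebraic stratification (or cell decomposition) theorem to the graph of $\gamma$ — or more simply, to the function $\gamma$ itself viewed as a map $[0,1]\to\E$. This yields a finite partition of $[0,1]$ into points and open intervals such that $\gamma$ restricted to each open interval is $C^1$ (indeed $C^p$ for any prescribed $p$). Since $0$ is one of the finitely many partition points, there is an open interval of the form $(0,\epsilon)$ in the partition, and on it $\gamma$ is $C^1$-smooth. That settles the first assertion.

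For the second assertion, I would first argue that the limit $\lim_{t\searrow 0}\dot\gamma(t)$ exists whenever the difference quotients stay bounded. Each coordinate function $\dot\gamma_i\colon(0,\epsilon)\to\R$ is univariate semialgebraic, so by the monotonicity theorem it is eventually monotone as $t\searrow 0$; hence $\lim_{t\searrow 0}\dot\gamma_i(t)$ exists in $\R\cup\{\pm\infty\}$. I would rule out the infinite values using the boundedness of the quotients together with the mean value theorem: for small $t$, $t^{-1}(\gamma(t)-\gamma(t/2))$ equals $\dot\gamma_i(\xi_t)$ for some $\xi_t\in(t/2,t)$ in each coordinate, and as $t\searrow 0$ the left side stays bounded (being a difference of two bounded quotients, up to the factor $2$), forcing the limit of $\dot\gamma_i$ to be finite. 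Write $v=\lim_{t\searrow 0}\dot\gamma(t)$.

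Finally I would show $\gamma'(0)=v$. This is the standard argument that a function whose derivative has a one-sided limit is one-sided differentiable with that derivative: for $t\in(0,\epsilon)$, the mean value theorem (applied coordinatewise, or via the fundamental theorem of calculus $\gamma(t)-\gamma(0)=\int_0^t\dot\gamma(s)\,ds$ once we know $\gamma$ is Lipschitz near $0$, which follows from the bounded-quotient hypothesis and continuity) gives $t^{-1}(\gamma(t)-\gamma(0))=t^{-1}\int_0^t\dot\gamma(s)\,ds\to v$ as $t\searrow 0$. Hence $\gamma'(0)$ exists and equals $v=\lim_{t\searrow 0}\dot\gamma(t)$, as claimed.

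I do not anticipate a serious obstacle here; the main point is simply to quote the right structural results (cell decomposition/stratification for the smoothness, monotonicity theorem for the existence of the limit). The only mildly delicate step is ensuring the one-sided limit of $\dot\gamma$ is finite rather than $\pm\infty$, which is where the boundedness hypothesis on the difference quotients is essential and must be used carefully via the mean value theorem.
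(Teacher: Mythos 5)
Your argument is correct, and since the paper states this lemma without proof (citing the standard semialgebraic/o-minimal references), there is no in-paper argument to diverge from; what you give is exactly the standard derivation one would find there: $C^1$ cell decomposition for the first claim, the monotonicity theorem for existence of $\lim_{t\searrow 0}\dot\gamma(t)$ in the extended reals, the mean value theorem plus the bounded-quotient hypothesis to force finiteness, and the fundamental theorem of calculus to identify $\gamma'(0)$ with that limit. The one delicate point you flag --- ruling out $\pm\infty$ --- is handled correctly, and continuity of $\gamma$ at $0$ (needed for the FTC step) indeed follows from the bound $\|\gamma(t)-\gamma(0)\|\leq Ct$.
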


Notice that the equality $\gamma'(0)=\lim_{t\searrow 0} \dot{\gamma}(t)$ can be interpreted as semi-smoothness of univariate semi-algebraic functions---an observation we will revisit.
An immediate consequence is that locally Lipschitz semi-algebraic maps  are  directionally differentiable. The following theorem moreover shows that semi-algebraic maps are ``generically smooth.'' To simplify notation, we use the term {\em $C^1$ semialgebraic partition of $\R^n$} to mean a partition of $\R^n$ into finitely many semialgebraic $C^1$  manifolds.

\begin{thm}[Generic smoothness]\label{thm:gen_smooth}
Let $F\colon\E\to\R^m$ be a semialgebraic map. Then there exists a $C^1$-semialgebraic partition $\cA$ of $\R^n$ such that the restriction of $F$ to each manifold $M\in \cA$ is $C^1$-smooth. 
\end{thm}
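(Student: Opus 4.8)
The plan is to obtain the partition $\cA$ from the $C^1$ cell decomposition theorem of semialgebraic geometry; I also sketch a self-contained inductive alternative. Recall that the $C^p$ cell decomposition theorem (see e.g.\ \cite{Coste-semi,Dries-Miller96}) guarantees, for every integer $p\geq 1$ and every finite family of semialgebraic functions $g_1,\dots,g_k\colon\R^n\to\R$, a finite partition of $\R^n$ into semialgebraic $C^p$ cells --- each of which is in particular a semialgebraic $C^p$ manifold --- on each of which every $g_j$ is $C^p$. Applying this with $p=1$ to the coordinate functions $F_1,\dots,F_m$ of $F$, and noting that $F|_M$ is $C^1$ precisely when every $F_i|_M$ is, yields the required $\cA$.

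For a derivation that does not quote cell decomposition wholesale, I would prove by induction on $d=\dim S$ the slightly more flexible statement: for every semialgebraic set $S\subseteq\R^n$ and every semialgebraic map $F\colon S\to\R^m$, the set $S$ admits a finite partition into semialgebraic $C^1$ manifolds on which $F$ is $C^1$. The case $d\leq 0$ ($S$ finite) is trivial. For the inductive step, consider the graph $\gph F\subseteq\R^n\times\R^m$ and the coordinate projection $\pi\colon\R^n\times\R^m\to\R^n$; since $\pi$ restricts to a semialgebraic bijection of $\gph F$ onto $S$, we have $\dim\gph F=d$. By the standard fact that a semialgebraic set is, away from a lower-dimensional semialgebraic subset, a finite disjoint union of semialgebraic $C^1$ manifolds, write $\gph F=\bigl(\bigcup_{i=1}^N\Gamma_i\bigr)\cup E$ with each $\Gamma_i$ a semialgebraic $C^1$ manifold and $\dim E<d$. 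The map $p\mapsto{\rm rank}\,d(\pi|_{\Gamma_i})(p)$ is semialgebraic with finitely many values, so after a further finite refinement of each $\Gamma_i$ we may assume it is constant on each manifold; this constant must equal $\dim\Gamma_i$, since otherwise the constant-rank theorem would present $\pi$ locally as a submersion with positive-dimensional fibers followed by an immersion, contradicting injectivity of $\pi$ on $\gph F$. Hence on each resulting manifold $\Gamma_i'$ the map $\pi$ is an injective $C^1$ immersion; being injective, continuous and semialgebraic, it is moreover a homeomorphism onto its image, hence a $C^1$ embedding. Therefore $M_i':=\pi(\Gamma_i')$ is a semialgebraic $C^1$ submanifold of $\R^n$, the inverse $(\pi|_{\Gamma_i'})^{-1}$ is $C^1$, and $F|_{M_i'}$ --- namely the second-coordinate projection composed with $(\pi|_{\Gamma_i'})^{-1}$ --- is $C^1$. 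The $M_i'$ are pairwise disjoint by injectivity of $\pi$, and $S\setminus\bigcup_iM_i'$ is contained in $\pi\bigl(\gph F\setminus\bigcup_i\Gamma_i'\bigr)$, a semialgebraic set of dimension $<d$; applying the inductive hypothesis to the restriction of $F$ to this leftover set and taking the union with the $M_i'$ completes the step.

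The genuinely nontrivial ingredients are all standard semialgebraic geometry: the existence of $C^1$ stratifications of semialgebraic sets, the generic constancy of the rank of a semialgebraic $C^1$ map, and the fact that injective continuous semialgebraic maps are homeomorphisms onto their images \cite{Coste-semi,Dries-Miller96}. The main point to get right is the passage from ``$\pi|_{\Gamma_i}$ is injective'' to ``$\pi|_{\Gamma_i'}$ is a $C^1$ embedding'', which relies on the finite refinement to locally constant rank together with that semialgebraic homeomorphism fact; the remaining bookkeeping --- disjointness of the pieces, the strict drop in dimension of the residual set, and termination of the induction --- is routine. Of course, citing the cell decomposition theorem as in the first paragraph sidesteps this argument entirely.
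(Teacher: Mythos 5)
Your first route is exactly right and is all the paper itself does: Theorem~\ref{thm:gen_smooth} is quoted as a standard consequence of the $C^1$ cell decomposition theorem from the references \cite{Coste-semi,Dries-Miller96}, applied to the coordinate functions of $F$. That paragraph alone settles the statement.

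The self-contained inductive alternative, however, has a genuine flaw: the asserted fact that an injective continuous semialgebraic map is a homeomorphism onto its image is false, and the step where you pass from ``injective $C^1$ immersion'' to ``$C^1$ embedding'' can fail. Concretely, take $n=2$, $m=1$, let $S=\{(t^2-1,\,t^3-t): t\in(-1,\infty)\}$ (the nodal cubic with one branch-end at the node removed) and define $F$ on $S$ by $F(t^2-1,t^3-t)=t$. Then $\gph F=\{(t^2-1,t^3-t,t):t\in(-1,\infty)\}$ is already a single semialgebraic $C^1$ curve in $\R^3$, and $\pi$ restricted to it is an injective $C^1$ immersion of everywhere maximal rank; yet $\pi(\gph F)=S$ is not a manifold at the origin, $(\pi|_{\gph F})^{-1}$ is discontinuous there, and $F$ itself is discontinuous at the origin. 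So your argument, run verbatim, would declare $M'=S$ a $C^1$ submanifold with $F|_{M'}$ of class $C^1$, which is wrong. The repair is standard but must be made explicit: first invoke generic continuity (the set of discontinuity points of a semialgebraic map on a $d$-dimensional set is semialgebraic of dimension $<d$) and push that set into the lower-dimensional residual handled by the induction; once $F$ is continuous on the piece under consideration, $(\pi|_{\gph F})^{-1}=(\mathrm{id},F)$ is continuous, so $\pi$ restricted to each stratum of the graph really is a homeomorphism onto its image and the rest of your argument goes through. Alternatively, one proves the theorem by the usual induction on the ambient dimension via cell decomposition, which is what the cited sources do.
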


A useful property that blends variational analytic and semialgebraic constructions is the projection formula, proved in \cite[Proposition 4]{Lewis-Clarke}.

\begin{thm}[Projection formula]\label{thm:proj_formula}
Let $f\colon\E\to\R$ be a semialgebraic locally Lipschitz continuous function. Then there exists a $C^1$-semialgebraic partition $\cA$ of $\R^n$  such that for any point $x$ lying in a manifold $M\in \cA$ equality holds:
$$\{f'(x,u)\}=\langle \partial_C f(x),u \rangle \qquad \forall u\in T_M(x).$$
\end{thm}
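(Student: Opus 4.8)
The plan is to reduce the statement to the one-dimensional ``curve'' behavior captured by Lemma~\ref{lem:semi_smooth_uni}, exploiting the freedom to refine partitions finitely many times. First I would apply Theorem~\ref{thm:gen_smooth} simultaneously to $f$ and to the (semialgebraic, outer-semicontinuous, compact-valued) map $x\mapsto\partial_C f(x)$ — more precisely, to the graph of $\partial_C f$ — to obtain a $C^1$-semialgebraic partition $\cA$ of $\R^n$ such that on each stratum $M\in\cA$ the restriction $f|_M$ is $C^1$ and, along $M$, the map $\partial_C f$ admits a local selection that is continuous on $M$ (by further stratifying using generic smoothness/continuity of semialgebraic set-valued maps and the fact that $\partial_C f(x)$ has locally constant dimension on a stratum). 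Throughout, shrinking or refining $\cA$ to a common refinement is harmless since only finitely many operations are involved.

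Next, fix a stratum $M\in\cA$, a point $x\in M$, and a tangent direction $u\in T_M(x)$. Choose a $C^1$ semialgebraic curve $\gamma\colon[0,1)\to M$ with $\gamma(0)=x$ and $\gamma'(0)=u$ (such a curve exists because $M$ is a $C^1$ manifold and semialgebraic, so one can produce an analytic/semialgebraic arc realizing any prescribed tangent vector). Since $f|_M$ is $C^1$, the composition $t\mapsto f(\gamma(t))$ is differentiable at $0$ with $\frac{d}{dt}\big|_{t=0} f(\gamma(t)) = \langle \nabla(f|_M)(x), u\rangle$; and because $f$ is locally Lipschitz and semialgebraic hence directionally differentiable (consequence of Lemma~\ref{lem:semi_smooth_uni}), the chain rule gives $\frac{d}{dt}\big|_{t=0} f(\gamma(t)) = f'(x,u)$. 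The crux is then to show $\langle \nabla(f|_M)(x),u\rangle \in \langle \partial_C f(x), u\rangle$ and, conversely, that $\langle \partial_C f(x),u\rangle$ is the singleton $\{f'(x,u)\}$. The inclusion ``$f'(x,u)\le \max\langle\partial_C f(x),u\rangle$'' and ``$f'(x,u)\ge \min\langle\partial_C f(x),u\rangle$'' always hold by Lebourg's mean value theorem applied along $\gamma$ together with outer semicontinuity of $\partial_C f$. So it remains to rule out strict inequality, i.e.\ to show $\langle\partial_C f(x),u\rangle$ is a single point for $u\in T_M(x)$.

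To get the reverse bound I would argue as follows: for a.e.\ small $t$, $\gamma(t)$ lies in a possibly different stratum, but after refining $\cA$ we may assume $\gamma((0,\epsilon))$ stays in a single stratum $M'$ of dimension $\ge\dim M$ whose closure contains $x$; on $M'$, $f$ is differentiable and $\nabla(f|_{M'})$ extends continuously to $x$ along $\gamma$, and by the definition of the Clarke Jacobian every such limit of gradients restricted to the direction $\dot\gamma(t)\to u$ produces the value $f'(x,u)$. The key structural input is that for a generic stratification, the directional derivative $v\mapsto f'(x,v)$ restricted to $v\in T_M(x)$ is \emph{linear} in $v$ (it equals $\langle\nabla(f|_M)(x),v\rangle$), whereas $v\mapsto \max\langle\partial_C f(x),v\rangle$ is sublinear; a sublinear function that is squeezed between two copies of the same linear function on a subspace must itself be linear there, forcing $\langle\partial_C f(x),v\rangle$ to be a singleton on $T_M(x)$. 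The main obstacle I anticipate is precisely this last step — controlling $\partial_C f$, which involves limits of gradients taken over the full-dimensional smooth locus $\Omega$ and need not be ``seen'' by curves inside a low-dimensional stratum $M$; handling it requires a careful stratification ensuring that the frontier strata adjacent to $M$ interact correctly with $T_M(x)$, e.g.\ via Whitney conditions (a)/(b), which hold after refinement in the semialgebraic category. This is where I would invest the bulk of the technical work, and it is essentially the content of \cite[Proposition 4]{Lewis-Clarke}.
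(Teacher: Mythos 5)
First, note that the paper does not prove this theorem at all: it is quoted as a known result, ``proved in \cite[Proposition 4]{Lewis-Clarke}.'' So the only question is whether your sketch would stand on its own, and it does not. The decisive step --- showing that $\langle \partial_C f(x),u\rangle$ collapses to the singleton $\{f'(x,u)\}$ for $u\in T_M(x)$ --- is exactly where your argument breaks. The ``squeeze'' you propose is logically insufficient: Lebourg's theorem gives $\min\langle\partial_C f(x),u\rangle\le f'(x,u)\le\max\langle\partial_C f(x),u\rangle$, i.e.\ a linear function sandwiched between the concave and convex support functions of $\partial_C f(x)$, and this never forces those two bounds to coincide. (Take $\partial_C f(x)$ to be a ball centered at a vector $g$ with $f'(x,\cdot)=\langle g,\cdot\rangle$ on $T_M(x)$: both inequalities hold strictly for every $u\ne 0$, yet $\langle\partial_C f(x),u\rangle$ is an interval.) What is actually needed is the inclusion $\partial_C f(x)\subset \nabla (f|_M)(x)+N_M(x)$, i.e.\ that \emph{every} limit of gradients $\nabla f(x_i)$, $x_i\to x$ from the smooth locus, has tangential part equal to $\nabla(f|_M)(x)$; no mean-value or convexity argument at the single point $x$ can deliver this.

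Moreover, the tool you name for that step is not quite the right one: Whitney condition (a) for a partition of the \emph{domain} only controls convergence of the tangent spaces $T_{M'}(x_i)\to T\supset T_M(x)$; it says nothing about convergence of the tangential gradients $\nabla(f|_{M'})(x_i)$ to $\nabla(f|_M)(x)$. One must instead stratify the \emph{graph} of $f$ (or impose a gradient/``$(w_f)$''-type condition adapted to $f$) and translate normals of the graph into subgradients --- this is precisely the content of \cite[Proposition 4]{Lewis-Clarke}, which you end up invoking for the very claim being proved. As written, then, the proposal is circular as a standalone proof, though it is a reasonable account of why the result is true and of where the genuine technical work lies; the preliminary reductions (generic $C^1$-ness of $f|_M$, realizing $u\in T_M(x)$ by a semialgebraic arc in $M$, and identifying $f'(x,u)$ with $\langle\nabla(f|_M)(x),u\rangle$) are all fine.
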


The next result shows that any semialgebraic  map admits a semialgebraic single-valued selection.

\begin{lemma}[Semialgebraic selection]\label{lem:semialgebraic_select}
Any semi-algebraic  map $G\colon\E\too\R^m$ admits a semialgebraic selection $g\colon\dom G\to \R$ satisfying $g(x)\in G(x)$ for all $x\in \dom G$.
\end{lemma}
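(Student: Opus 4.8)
The plan is to establish the standard ``definable choice'' principle in the semialgebraic category, by reducing to the one-dimensional target and then treating that case directly. First I would induct on $m$. Writing a generic point of $\R^m$ as $(y,z)$ with $y\in\R^{m-1}$, $z\in\R$, and letting $\pi\colon\R^m\to\R^{m-1}$ be the coordinate projection, the map $x\mapsto\pi(G(x))$ is again semialgebraic (its graph is the image of $\gph G$ under $\mathrm{id}_{\E}\times\pi$, hence semialgebraic by Tarski--Seidenberg) and has domain $\dom G$; by the inductive hypothesis it admits a semialgebraic selection $h\colon\dom G\to\R^{m-1}$. Then $x\mapsto\{z\in\R:(h(x),z)\in G(x)\}$ is a semialgebraic map $\dom G\too\R$ with full domain $\dom G$, and a semialgebraic selection $k$ of it yields the semialgebraic selection $g(x):=(h(x),k(x))$ of $G$. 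So everything comes down to the case $m=1$.

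For $m=1$, each fiber $S:=G(x)$ is a nonempty semialgebraic subset of $\R$, hence a finite union of points and open intervals. I would define $g(x)$ by inspecting the leftmost ``piece'' of $S$: if $S$ has a least element $a$, set $g(x)=a$; otherwise the part of $S$ lying just to the right of $\inf S$ is an interval with left endpoint $a\in[-\infty,\infty)$ and right endpoint $b\in(-\infty,\infty]$, and one sets $g(x)=(a+b)/2$ if $a,b$ are finite, $g(x)=a+1$ if $a$ is finite and $b=+\infty$, $g(x)=b-1$ if $a=-\infty$, and $g(x)=0$ in the degenerate case $S=\R$. In every case $g(x)\in G(x)$, and since $a$, $b$, and the conditions distinguishing the cases are each expressed by a first-order formula over the ordered field $\R$ with parameter $x$, the graph of $g$ is semialgebraic. (Equivalently: cell-decompose $\dom G$ into finitely many semialgebraic pieces on which the combinatorial type of the fiber is constant, and check that $g$ is semialgebraic on each piece.)

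The main obstacle I anticipate is precisely this last bookkeeping in the case $m=1$: ensuring the fiberwise choice is \emph{uniform} in $x$, so that $g$ is genuinely semialgebraic rather than merely a set-theoretic selection. This leans on the finiteness of the number of connected components of a semialgebraic subset of $\R$ together with quantifier elimination (or, packaged conveniently, cell decomposition). Alternatively, the statement is verbatim the ``definable choice''/``definable Skolem functions'' theorem for o-minimal structures expanding an ordered field, and one may simply cite it, e.g.\ from van den Dries's monograph or Coste's lecture notes.
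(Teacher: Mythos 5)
Your argument is correct and complete. The paper itself offers no proof of this lemma: it is listed among the ``elementary facts from semialgebraic geometry'' and implicitly deferred to the cited references (Coste, van den Dries--Miller), where it appears as the definable choice / definable Skolem functions theorem. What you have written is precisely the standard proof of that theorem: reduce to target dimension one by induction using Tarski--Seidenberg to keep the projected map and the fiber map semialgebraic, then make a uniform first-order-definable choice from a nonempty semialgebraic subset of $\R$ via its leftmost connected component (least element if it exists, otherwise midpoint or a unit shift from a finite endpoint, with $0$ for the fiber $\R$). Your case analysis is exhaustive, each chosen value does lie in the fiber, and the endpoints and case conditions are definable in $x$, so the graph of $g$ is semialgebraic. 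One cosmetic point: the lemma as printed has a typo ($g\colon\dom G\to\R$ should read $g\colon\dom G\to\R^m$), which your proof silently and correctly repairs.
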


The following theorem, from \cite[Proposition 2.28]{dim}, shows that semialgebraic set-valued maps are generically inner-semicontinuous.
\begin{theorem}[Generic inner semicontinuity]\label{thm:inner_semicont}
Let $G\colon \E\rightrightarrows\R^m$ be a semialgebraic map. Then there exists a $C^1$-semialgebraic partition $\cA$ of $\R^n$  such that the restriction $G\mid_{\mathcal{M}}$ is inner-semicontinuous for every  $\mathcal{M}\in \cA$. 
\end{theorem}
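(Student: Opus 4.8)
The plan is to deduce the statement from Hardt's semialgebraic triviality theorem, after which only a routine continuity argument remains. I would first apply Hardt's theorem to the coordinate projection $\pi\colon\gph G\to\dom G$ (a continuous semialgebraic surjection, since $\dom G=\pi(\gph G)$). This yields a finite partition $\dom G=A_1\sqcup\cdots\sqcup A_k$ into semialgebraic sets and, for each $i$, a semialgebraic set $F_i\subseteq\R^m$ together with a semialgebraic homeomorphism
\[
\theta_i\colon\pi^{-1}(A_i)\longrightarrow A_i\times F_i,\qquad \mathrm{pr}_{A_i}\circ\theta_i=\pi .
\]
Using the $C^1$ cell decomposition theorem for semialgebraic sets, I would then choose a $C^1$-semialgebraic partition $\cA$ of $\R^n$ refining the finite family $\{A_1,\dots,A_k,\ \R^n\setminus\dom G\}$, and claim that $\cA$ does the job.

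To verify this, fix $M\in\cA$. If $M\subseteq\R^n\setminus\dom G$, then $G$ is identically empty on $M$ and inner-semicontinuity there is vacuous. Otherwise $M\subseteq A_i$ for some $i$, and restricting $\theta_i$ gives a semialgebraic homeomorphism $\theta_M\colon\pi^{-1}(M)\to M\times F_i$ with $\mathrm{pr}_M\circ\theta_M=\pi$. Given $x\in M$, $y\in G(x)$, and a sequence $x_j\to x$ with $x_j\in M$, I would write $\theta_M(x,y)=(x,f)$ with $f\in F_i$; then $(x_j,f)\in M\times F_i$ and $(x_j,f)\to(x,f)$, so continuity of $\theta_M^{-1}$ forces
\[
\theta_M^{-1}(x_j,f)\ \longrightarrow\ \theta_M^{-1}(x,f)=(x,y)\qquad\text{in }\E\times\R^m .
\]
Because $\mathrm{pr}_M\circ\theta_M=\pi$, each $\theta_M^{-1}(x_j,f)$ lies over $x_j$, i.e.\ equals $(x_j,y_j)$ for some $y_j\in G(x_j)$; hence $y_j\to y$, which is exactly inner-semicontinuity of $G$ on $M$.

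The only substantive input here is Hardt's triviality theorem (equivalently, generic local triviality of semialgebraic maps), which I regard as the one genuine obstacle; given it, the rest is bookkeeping with partitions and the bicontinuity of a homeomorphism. If one wanted to stay within the tools already quoted, an alternative would be to compactify the range via a semialgebraic homeomorphism $\R^m\to\text{(open ball)}$, pass to the compact-valued map $x\mapsto\cl G(x)$, invoke a generic Hausdorff-continuity statement for compact-valued semialgebraic maps, and recover inner-semicontinuity of $G$ by a diagonal approximation of points of $\cl G(x_j)$ by points of $G(x_j)$ — but that merely relocates the work into proving the Hausdorff-continuity genericity statement, which is of the same character as Theorem \ref{thm:gen_smooth}.
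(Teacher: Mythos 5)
Your argument is correct. Note, however, that the paper does not actually prove this theorem: it is imported from the literature (cited as Proposition 2.28 of an external reference), so there is no in-paper proof to compare against, and your proposal in effect supplies the omitted argument. The route via Hardt's semialgebraic triviality theorem is a standard and clean way to get the result, and every step checks out: you apply Hardt to the projection $\pi\colon\gph G\to\dom G$, refine the resulting partition $\{A_1,\dots,A_k,\ \R^n\setminus\dom G\}$ into a $C^1$-semialgebraic partition using Theorem~\ref{thm:com_part}, and then, over a stratum $M\subseteq A_i$, transport a point $y\in G(x)$ along the constant fiber coordinate $f$ and invoke continuity of $\theta_i^{-1}$ restricted to $M\times F_i$. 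The identity $\mathrm{pr}_{A_i}\circ\theta_i=\pi$ does double duty exactly as you use it: it gives $\theta_i\bigl(\pi^{-1}(M)\bigr)=M\times F_i$ (so the restricted map is a homeomorphism onto $M\times F_i$ and the points $(x_j,f)$ lie in its range), and it forces $\theta_M^{-1}(x_j,f)$ to have first coordinate $x_j$, producing the required approximating points $y_j\in G(x_j)$. The handling of strata outside $\dom G$ as vacuous is also right, since compatibility ensures each $M$ sits inside exactly one member of the original partition. The only genuinely nontrivial input is Hardt's theorem, which is of the same standing as the other semialgebraic facts the paper quotes without proof (Theorems~\ref{thm:gen_smooth} and~\ref{thm:com_part}); the published proofs of this genericity statement are of the same character, either Hardt triviality or a stratification-plus-dimension-induction argument, so nothing essential is being swept under the rug.
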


 Many of the aforementioned results guarantee existence of certain partitions of $\R^n$. Consequently, it will be useful to refine partitions. A partition $\cA$ of $\R^n$ is called {\em compatible} with a collection $\mathcal{B}$ of sets in  $\R^n$  if each set in $\mathcal{B}$ is a union of some sets in $\mathcal{A}$.

\begin{theorem}[Compatible partitions]\label{thm:com_part}
Let $\mathcal{B}$ be a collection of finitely many semialgebraic sets. Then there exists a $C^1$-semialgebraic partition $\cA$ of $\R^n$  that is compatible with $\mathcal{B}$.
\end{theorem}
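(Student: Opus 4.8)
The plan is to deduce the theorem from the generic smoothness result, Theorem~\ref{thm:gen_smooth}, by encoding the incidence pattern of a point with respect to the family $\mathcal{B}$ as a single scalar semialgebraic function.

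First I would pass from $\mathcal{B}=\{B_1,\dots,B_k\}$ to the partition of $\R^n$ into the atoms of the Boolean algebra it generates: for each sign vector $\sigma\in\{0,1\}^k$ put $A_\sigma=\bigcap_{i:\,\sigma_i=1}B_i\cap\bigcap_{i:\,\sigma_i=0}(\R^n\setminus B_i)$. The sets $A_\sigma$ are semialgebraic (Boolean combinations of semialgebraic sets), pairwise disjoint, cover $\R^n$, and each $B_i$ is the union of those $A_\sigma$ with $\sigma_i=1$. Consequently any partition compatible with $\{A_\sigma:\sigma\in\{0,1\}^k\}$ is automatically compatible with $\mathcal{B}$, so it suffices to produce a $C^1$-semialgebraic partition compatible with the $A_\sigma$'s.

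Next, fix an injection $\mathrm{code}\colon\{0,1\}^k\to\R$, say $\sigma\mapsto\sum_{i=1}^k 2^{i-1}\sigma_i$, and define $\chi\colon\R^n\to\R$ by letting $\chi(x)=\mathrm{code}(\sigma)$ for the unique $\sigma$ with $x\in A_\sigma$. Its graph is the finite union $\bigcup_\sigma\bigl(A_\sigma\times\{\mathrm{code}(\sigma)\}\bigr)$, hence semialgebraic, so $\chi$ is a semialgebraic map. Applying Theorem~\ref{thm:gen_smooth} to $\chi$ yields a $C^1$-semialgebraic partition $\mathcal{P}$ of $\R^n$ on each manifold of which $\chi$ restricts to a $C^1$ function. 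Refine $\mathcal{P}$ to a partition $\cA$ by replacing every $M\in\mathcal{P}$ with its connected components: a semialgebraic set has finitely many connected components, each semialgebraic, and a connected component of a $C^1$ manifold is open in it and therefore again a $C^1$ manifold, so $\cA$ is still a $C^1$-semialgebraic partition of $\R^n$. For any $M'\in\cA$, the restriction $\chi|_{M'}$ is continuous on the connected set $M'$ and takes values in the finite set $\mathrm{code}(\{0,1\}^k)$, hence is constant; thus $M'\subseteq A_\sigma$ for some $\sigma$. It follows that each $A_\sigma$ is the union of the members of $\cA$ contained in it, so $\cA$ is compatible with $\{A_\sigma\}$ and hence with $\mathcal{B}$.

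The argument is largely bookkeeping once Theorem~\ref{thm:gen_smooth} is available, and I do not anticipate a serious obstacle; the only points needing a word of justification are that $\chi$ is genuinely semialgebraic (immediate from the Boolean description of the $A_\sigma$) and the standard structural fact that semialgebraic sets have finitely many, semialgebraic, connected components, which underlies the refinement step. As an alternative one could invoke the semialgebraic cell decomposition or stratification theorem from \cite{Coste-semi,Coste-min,Dries-Miller96} directly, but factoring the proof through Theorem~\ref{thm:gen_smooth} keeps the development self-contained.
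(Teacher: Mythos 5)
Your argument is correct. The paper itself offers no proof of this statement---it is quoted as a standard fact from the cited references on semialgebraic/o-minimal geometry, where it is an immediate consequence of the cell (or stratification) decomposition theorem, which produces a decomposition of $\R^n$ into finitely many $C^1$ cells compatible with any prescribed finite family of definable sets. Your route is genuinely different: you bootstrap the result from Theorem~\ref{thm:gen_smooth} by encoding the Boolean atoms $A_\sigma$ of $\mathcal{B}$ into a single integer-valued semialgebraic function $\chi$, partitioning so that $\chi$ is $C^1$ (hence continuous) on each piece, and then passing to connected components so that $\chi$ is constant on each piece. Every step checks out: the atoms are semialgebraic and refine $\mathcal{B}$; $\chi$ is a bona fide semialgebraic map (Theorem~\ref{thm:gen_smooth} does not require continuity of its input); and a continuous function with finite range on a connected set is constant. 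What the standard route buys is directness---compatibility is built into the cell decomposition theorem from the start---while your route buys economy relative to the black boxes actually stated in the paper, at the cost of importing one extra standard fact (finiteness and semialgebraicity of connected components of a semialgebraic set), which is itself usually proved via the same cell decomposition machinery. So the reduction is elegant but not logically lighter than the textbook proof; as a self-contained derivation from the paper's stated preliminaries, however, it is perfectly sound.
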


\section{Main results}
Throughout, we let $F\colon\R^n\to\R^m$ be a locally Lipschitz continuous and directionally differentiable map and let $D\colon\E\times \E\too\R^m$ be a set-valued map. The reader may view $D(x,u)$ as a ``generalized directional derivative'' of $F$ at $x$ in direction $u$. %, which we will later verify for interesting examples.

\begin{assumption}\label{ass:standing}
We introduce the following assumptions on $D$.
\begin{enumerate}
\item {\bf (Full domain)} The image $D(x,u)$ is nonempty for all $x,u\in\E$.
\item {\bf (Homogeneity)} The map $D$ is positively homogeneous in the second argument: 
$$D(x,0)=\{0\}\qquad \textrm{and}\qquad D(x,tu)=t D(x,u),$$
for all  $x,u\in \E$ and $t> 0$. 
\item {\bf (Lipschitz continuity)} The assignment $D(x,\cdot)$ is Lipschitz continuous locally uniformly in $x$. That is, for every point $\bar x\in \E$, there exists $L>0$ such that 
$$\dist(D(x,u_1),D(x,u_2))\leq L\|u_1-u_2\|,$$
for all  $u_1,u_2\in \E$ and all $x$ sufficiently close to $\bar x$.
\end{enumerate}
\end{assumption}

The first condition is self-explanatory. The second condition, which asserts that $\gph D(x,\cdot)$ is a cone, is natural for directional derivatives. The third condition is more nuanced but is again  mild. In particular, all three conditions hold for the directional derivative $D(x,u)=F'(x,u)$ and for any map of the form $D(x,u)=J(x)u$ where $J\colon\E\too\R^{m\times n}$ is locally bounded.

Clearly, $D(x,u)$ can be regarded as a generalized directional derivative of $F$ only if it accurately predicts the variations of $F$ at $x$ in direction $u$. There are a number of seemingly distinct conditions in the literature that model ``goodness of approximation,'' depending on context. We record the most relevant ones for us below and comment on each.

\begin{condition}
	We introduce the following conditions.
\begin{enumerate}
\item\label{eqn:NewtI} {\bf (Semismooth I)} For any point $x$ it holds: 
$$\LS_{y\to x}~\frac{F(y)-F(x)-D(y;y-x)}{\|y-x\|}=\{0\},$$
\item\label{eqn:NewtII} {\bf (Semismooth II)} For any point $x$ it holds: 
$$\LS_{y\to x}~\frac{F(y)-F(x)+D(y;x-y)}{\|y-x\|}=\{0\}.$$
\item\label{eqn:conserv} {\bf (Conservative derivative)} For any absolutely continuous curve $\gamma\colon [0,1]\to \R^n$, equality holds:
\begin{equation}\label{eqn:def_cons}
\left\{\frac{d}{dt}(F\circ \gamma)(t)\right\}=D(\gamma(t),\dot \gamma(t))\qquad \textrm{for a.e. } t\in (0,1).
\end{equation}
\item\label{eqn:strat_der} {\bf (Stratified derivative)} There exists a  semialgebraic $C^1$ partition $\cA$ of $\R^n$ such that equality 
$$D(x,u)=\{F'(x,u)\}$$
holds for any manifold $M\in \cA$,  $x\in M$, and any tangent vector $u\in T_M(x)$.
\item \label{it:clarke_strat} {\bf (Stratified subdifferential)} There exists a semialgebraic $C^1$ partition $\cA$ of $\R^n$ such that for any manifold $M\in \cA$ and  $x\in M$, it holds:
$$D(x,u)\subset J(x)u,$$
 where we set 
$$J(x)=\{A\in \R^{m\times n}: A_i\in \partial_C F_i(x)+N_{M}(x)\},$$
and $A_i$ denote the rows of $A$.
\end{enumerate}
\end{condition}

Let us discuss these conditions in turn. 

\paragraph{Semismoothness.}{
The first two conditions, \ref{eqn:NewtI} and \ref{eqn:NewtII},
play a central role for ensuring superlinear convergence of Newton-type algorithms for nonsmooth equations. We refer the reader the monograph \cite[Chapter 10]{kk_book} for details. To place these conditions in context, recall that any locally Lipschitz and directionally differentiable map $F$ satisfies the first-order approximation property \cite{shapiro1990concepts}:
\begin{equation}\label{eqn:b_der}
F(y)=F(x)+F'(x,y-x)+o(\|y-x\|)\qquad \textrm{as } y\to x.
\end{equation}
Condition \ref{eqn:NewtI} instead asserts $$F(y)- F(x)-D(y,y-x)\subset o(\|y-x\|){\bf B}\qquad \textrm{as } y\to x.$$  Notice that contrary to \eqref{eqn:b_der}, the value $D(y,y-x)$ is computed at the basepoint $y$. Condition~\ref{eqn:NewtII}  asserts instead $$F(x)- F(y)-D(y,x-y)\subset o(\|y-x\|){\bf B}\qquad \textrm{as } y\to x.$$   Clearly, when $D$ has the form $D(x,u)=J(x)u$ for some set-valued map $J\colon\E\to\R^{m\times n}$, conditions \ref{eqn:NewtI} and \ref{eqn:NewtII} are equivalent. In particular, if $F$ is directionally differentiable and $J(x)=\partial_C F(x)$ is the Clarke generalized Jacobian, both conditions reduce to the semismoothness property in the sense of \cite{mifflin1977semismooth,qi1993nonsmooth}. In the context of optimization, similar conditions have been used by Norkin \cite{norkin1980generalized,norkin1986stochastic} to establish asymptotic convergence guarantees of subgradient methods. }

\paragraph{Conservative derivative.}
Condition \ref{eqn:conserv} asserts that the generalized directional derivative $D$ satisfies a formal chain rule at almost every point along any absolutely continuous path $\gamma$. Equivalently, we may write condition \eqref{eqn:def_cons} as:
$$\{F'(\gamma(t),\dot\gamma(t))\}=D(\gamma(t),\dot{\gamma}(t))\qquad \textrm{for a.e. } t\in (0,1).$$
 This property is equivalent to the conservative derivatives $J(x)$ introduced in \cite{bolte2020conservative}, in the setting $D(x,u)=J(x)u$. Such generalized derivatives play a key role in justifying  automatic differentiation techniques for nonsmooth functions \cite{bolte2020mathematical}, as well as for analyzing the asymptotic behavior of  subgradient methods \cite{davis2020stochastic}.

\paragraph{Stratified derivative/subdifferential}{
Condition~\ref{eqn:strat_der} is geometrically intuitive. It simply asserts that there exists a partition of $\R^n$ into finitely many smooth manifolds, so that $D(x,\cdot)$ coincides with the directional derivative in directions tangent to the manifold containing the point $x$. %This condition is often the easiest to verify.
Condition~\ref{it:clarke_strat} can be interpreted as a ``dual'' counterpart of \ref{eqn:strat_der}. Namely it stipulates that $D(x,u)$ is a selection of the map $(x,u)\mapsto J(x)u$, where the rows of $J(x)$ consist of the Clarke subdifferentials of the component functions $F_i$ shifted by a normal space $N_M(x)$. The ``duality'' between conditions~\ref{eqn:strat_der} and~\ref{it:clarke_strat} is explored in \cite{Lewis-Clarke}.}

\bigskip
\subsection{Equivalence of \ref{eqn:NewtI}-\ref{it:clarke_strat}}

The goal of our paper is to prove that if Assumption~\ref{ass:standing} holds and $F$ and $D$ are semialgebraic, then conditions  \ref{eqn:NewtI}-\ref{it:clarke_strat} are equivalent. The authors of  \cite{lewis2021structure}, building on \cite{bolte2020conservative}, recently proved the equivalence $\ref{eqn:conserv}\Leftrightarrow\ref{it:clarke_strat}$ for maps of the form $D(x,u)=J(x)u$ where $J(\cdot)$ locally bounded and outer-semicontinuous. Therefore we claim no originality with respect to the equivalence  $\ref{eqn:conserv}\Leftrightarrow \ref{it:clarke_strat}$. 

We begin by proving that  \ref{eqn:conserv} implies \ref{eqn:NewtI} and \ref{eqn:NewtII}. The following simple lemma will be useful.

\begin{lemma}\label{lemm:directional_equal}
Condition \ref{eqn:conserv} implies that for any absolutely continuous curve $\gamma\colon [0,1]\to \R^n$, equality holds:
\begin{equation*}
D(\gamma(t),\dot \gamma(t))=-D(\gamma(t),-\dot \gamma(t))\qquad \textrm{for a.e. } t\in (0,1). 
\end{equation*}
\end{lemma}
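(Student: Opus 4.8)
The plan is to exploit the time-reversal symmetry inherent in the chain rule of Condition~\ref{eqn:conserv}. Fix an absolutely continuous curve $\gamma\colon[0,1]\to\R^n$ and consider the reversed curve $\tilde\gamma(t):=\gamma(1-t)$. Since the affine reparametrization $r\colon t\mapsto 1-t$ is a $C^1$ diffeomorphism of $[0,1]$ onto itself that preserves Lebesgue-null sets, the curve $\tilde\gamma=\gamma\circ r$ is again absolutely continuous; moreover $\tilde\gamma$ is differentiable at exactly those $t$ for which $\gamma$ is differentiable at $1-t$, and there $\dot{\tilde\gamma}(t)=-\dot\gamma(1-t)$. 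Likewise, $F\circ\tilde\gamma=(F\circ\gamma)\circ r$ is absolutely continuous and $\frac{d}{dt}(F\circ\tilde\gamma)(t)=-\frac{d}{dt}(F\circ\gamma)(1-t)$ for a.e.\ $t$.

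Next I would apply Condition~\ref{eqn:conserv} to each of the two curves. Applied to $\gamma$ it yields $\left\{\frac{d}{dt}(F\circ\gamma)(s)\right\}=D(\gamma(s),\dot\gamma(s))$ for a.e.\ $s\in(0,1)$. Applied to $\tilde\gamma$ it yields $\left\{\frac{d}{dt}(F\circ\tilde\gamma)(t)\right\}=D(\tilde\gamma(t),\dot{\tilde\gamma}(t))$ for a.e.\ $t\in(0,1)$; substituting the identities from the previous paragraph and writing $s=1-t$, this becomes $\left\{-\frac{d}{dt}(F\circ\gamma)(s)\right\}=D(\gamma(s),-\dot\gamma(s))$ for a.e.\ $s\in(0,1)$, where I again use that $r$ is measure-preserving so the exceptional null set in the $t$-variable pulls back to a null set in the $s$-variable. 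Equivalently, $-D(\gamma(s),-\dot\gamma(s))=\left\{\frac{d}{dt}(F\circ\gamma)(s)\right\}$ for a.e.\ $s$.

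Intersecting the two full-measure sets produced above and comparing right-hand sides then gives $D(\gamma(s),\dot\gamma(s))=\left\{\frac{d}{dt}(F\circ\gamma)(s)\right\}=-D(\gamma(s),-\dot\gamma(s))$ for a.e.\ $s\in(0,1)$, which is the assertion. The only point that needs care—and it is entirely routine—is the measure-theoretic bookkeeping: one must verify that differentiability of $\gamma$, of $\tilde\gamma$, of $F\circ\gamma$, and of $F\circ\tilde\gamma$ can all be assumed simultaneously off a common null subset of $(0,1)$, which is immediate because $r\colon t\mapsto 1-t$ preserves Lebesgue measure. Note that neither semialgebraicity nor any part of Assumption~\ref{ass:standing} is invoked here (positive homogeneity is not needed, and nonemptiness of $D(\gamma(t),\dot\gamma(t))$ holds automatically a.e.\ by Condition~\ref{eqn:conserv} itself); the lemma is purely a pointwise-a.e.\ consequence of the chain rule together with reversal invariance.
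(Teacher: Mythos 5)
Your argument is correct and is essentially identical to the paper's own proof, which also defines the reversed curve $\alpha(t)=\gamma(1-t)$, notes $\dot\alpha(t)=-\dot\gamma(1-t)$ and $(F\circ\alpha)'(t)=-(F\circ\gamma)'(1-t)$ a.e., and applies condition~\ref{eqn:conserv} to $\alpha$. Your additional remarks on the measure-preserving reparametrization just make explicit the bookkeeping the paper leaves implicit.
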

\begin{proof}
Fix an absolutely continuous curve $\gamma\colon [0,1]\to \R^n$ and define $\alpha(t)=\gamma(1-t)$. Then $\dot\alpha(t)=-\dot\gamma(1-t)$ and $(F\circ\alpha)'(t)=-(F\circ\gamma)'(1-t)$ for a.e. $t\in (0,1)$. Plugging in $\alpha$ in place of $\gamma$ in \eqref{eqn:def_cons} completes the proof.
\end{proof}

\begin{theorem}[Conservative derivatives and semismoothness]\label{thm:cons_newton}
Suppose that $F$ and $D$ are semialgebraic and that Assumption~\ref{ass:standing} holds. Then  condition \ref{eqn:conserv} implies both \ref{eqn:NewtI} and \ref{eqn:NewtII}.  Moreover, both implications are true if  \ref{eqn:conserv} only holds with respect to  semialgebraic curves $\gamma$.
\end{theorem}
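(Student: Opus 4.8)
The plan is to prove the contrapositive-free, direct route: show that condition \ref{eqn:conserv} forces $D$ to agree with $F'$ along almost every curve, and then leverage the semialgebraic curve-selection lemma (Lemma~\ref{lem:semi_smooth_uni}) to upgrade this to the pointwise semismoothness estimates \ref{eqn:NewtI} and \ref{eqn:NewtII}. The key realization is that failure of \ref{eqn:NewtI} at a point $x$ means there is a sequence $y_i \to x$ with $\|F(y_i) - F(x) - D(y_i, y_i - x)\| \geq c\|y_i - x\|$ for some $c > 0$; we want to route such a sequence through a semialgebraic curve emanating from $x$ so that the conservativity hypothesis applies.

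First I would set up the curve. Fix $x$ and suppose toward a contradiction that $\LS_{y\to x} \frac{F(y)-F(x)-D(y,y-x)}{\|y-x\|}$ contains a nonzero vector (or is otherwise not $\{0\}$). By the definition of $\LS$, pick $y_i \to x$ and $v_i \in D(y_i, y_i - x)$ with $\frac{F(y_i) - F(x) - v_i}{\|y_i - x\|} \to v \neq 0$. The set of all such ``bad'' configurations is captured by a semialgebraic set (using that $F$, $D$ are semialgebraic and that $\gph D$ is a cone), so by the semialgebraic curve selection lemma there is a semialgebraic curve $\gamma \colon [0,1] \to \R^n$ with $\gamma(0) = x$, $\gamma(t) \to x$ as $t \searrow 0$, along which the bad estimate persists: $\frac{F(\gamma(t)) - F(x) - D(\gamma(t), \gamma(t)-x)}{\|\gamma(t)-x\|}$ stays bounded away from $0$. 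By Lemma~\ref{lem:semi_smooth_uni}, after shrinking the interval, $\gamma$ is $C^1$ on $(0,\epsilon)$, $\gamma'(0)$ exists (it is the direction $u$ of approach, up to scaling — here one uses that $F$ is Lipschitz so the difference quotients of $\gamma$ are controlled, or one arc-length parametrizes), and $\gamma'(0) = \lim_{t\searrow 0}\dot\gamma(t)$.

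Next I would exploit conservativity along $\gamma$. On the one hand, $\frac{d}{dt}(F\circ\gamma)(t) \to (F\circ\gamma)'(0) = F'(x, \gamma'(0))$ as $t \searrow 0$ by Lemma~\ref{lem:semi_smooth_uni} applied to the composition $F\circ\gamma$ (again a semialgebraic curve with bounded difference quotients since $F$ is locally Lipschitz), using the chain rule $(F\circ\gamma)'(0) = F'(x,\gamma'(0))$ valid because $F$ is Lipschitz and directionally differentiable. On the other hand, condition \ref{eqn:conserv} gives $\frac{d}{dt}(F\circ\gamma)(t) \in D(\gamma(t), \dot\gamma(t))$ for a.e.\ $t$. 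Now use the Lipschitz continuity of $D(\gamma(t), \cdot)$ from Assumption~\ref{ass:standing}(3) together with $\dot\gamma(t) \to \gamma'(0)$ and a (semialgebraic, hence eventually continuous) selection argument / outer semicontinuity to pass to the limit and conclude that every point of $\LS_{t\searrow 0} D(\gamma(t),\dot\gamma(t))$ equals $F'(x,\gamma'(0))$. Combining, $D(\gamma(t),\dot\gamma(t)) \to F'(x,\gamma'(0))$ and $\frac{d}{dt}(F\circ\gamma)(t) \to F'(x,\gamma'(0))$, whence $\frac{F(\gamma(t)) - F(x)}{t} \to F'(x,\gamma'(0))$ and, crucially, $\frac{D(\gamma(t), \gamma(t)-x)}{t}$ — which by positive homogeneity equals $D(\gamma(t), t^{-1}(\gamma(t)-x))$ and $t^{-1}(\gamma(t)-x) \to \gamma'(0)$ — also converges to $F'(x,\gamma'(0))$. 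Therefore the ``bad'' quotient tends to $0$ along $\gamma$, contradicting its construction. This proves \ref{eqn:NewtI}. For \ref{eqn:NewtII}, apply the identical argument using Lemma~\ref{lemm:directional_equal}, which lets us replace $D(\gamma(t), \gamma(t)-x)$ by $-D(\gamma(t), x - \gamma(t))$ along a.e.\ curve; alternatively run the whole argument with the reversed curve $t \mapsto \gamma(1-t)$. The final sentence of the theorem — that everything goes through if \ref{eqn:conserv} is only assumed for semialgebraic curves — is automatic, since the curve $\gamma$ produced by the curve selection lemma is itself semialgebraic, so we only ever invoked \ref{eqn:conserv} along a semialgebraic path.

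\textbf{The main obstacle} I anticipate is the limit-passage in the middle step: showing $D(\gamma(t),\dot\gamma(t)) \to F'(x,\gamma'(0))$ as a set-valued limit. The difficulty is that $D$ need not be outer-semicontinuous, $\dot\gamma(t)$ need not converge monotonically, and $D(\gamma(t),\dot\gamma(t))$ is set-valued so ``the'' value $\frac{d}{dt}(F\circ\gamma)(t)$ is only one selection of it. The resolution is to work with a semialgebraic selection of $t \mapsto D(\gamma(t),\dot\gamma(t))$ (Lemma~\ref{lem:semialgebraic_select}), or better, to observe that by Lemma~\ref{lem:semi_smooth_uni} the semialgebraic curve $t \mapsto \frac{d}{dt}(F\circ\gamma)(t)$ extends continuously to $t=0$, and that the only quantity we actually need is $\lim_{t\searrow 0} D(\gamma(t), t^{-1}(\gamma(t)-x))$; we bound the deviation of this from $\frac{d}{dt}(F\circ\gamma)(t)$ by $\dist\big(D(\gamma(t),\dot\gamma(t)), D(\gamma(t), t^{-1}(\gamma(t)-x))\big) \leq L\|\dot\gamma(t) - t^{-1}(\gamma(t)-x)\|$ via Assumption~\ref{ass:standing}(3), and both arguments tend to $\gamma'(0)$, so the deviation vanishes. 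This reduces the set-valued subtlety to the scalar fact that $\dot\gamma(t)$ and $t^{-1}(\gamma(t)-x)$ share the common limit $\gamma'(0)$ — exactly the content of Lemma~\ref{lem:semi_smooth_uni}.
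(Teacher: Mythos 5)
Your proposal is correct and follows essentially the same route as the paper's proof: reduce the pointwise $\LS$ statement to a single semialgebraic curve emanating from $x$ (parametrized so that $\|\gamma(t)-x\|=t$), apply Lemma~\ref{lem:semi_smooth_uni} to get $\dot\gamma(t)\to\gamma'(0)$ and $(F\circ\gamma)'(t)\to\lim_{t\searrow 0}t^{-1}(F(\gamma(t))-F(x))$, identify $D(\gamma(t),\dot\gamma(t))$ with $\{(F\circ\gamma)'(t)\}$ via conservativity, and transfer to $D(\gamma(t),t^{-1}(\gamma(t)-x))$ using the Lipschitz property in Assumption~\ref{ass:standing}(3), with Lemma~\ref{lemm:directional_equal} handling condition~\ref{eqn:NewtII}. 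The only (cosmetic) difference is that you package the reduction to a curve as a contradiction argument via the standard curve selection lemma, whereas the paper introduces the sup-function $\varphi(t)$ and a near-maximizing semialgebraic selection from Lemma~\ref{lem:semialgebraic_select}.
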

\begin{proof}
Suppose condition \ref{eqn:conserv}  holds;
we aim to verify \ref{eqn:NewtI}. To this end, fix a point $x$ and assume without loss of generality $x=0$ and $F(x)=0$. %Let's show the semismoothness estimate:
%$$\lim_{d\to0 ,~J\in D(d)}\frac{F(d)-Jd}{\|d\|}=0.$$
We aim to prove
$$\LS_{d\to0}~\frac{F(d)-D(d,d)}{\|d\|}=\{0\}.$$
For any $t\in [0,1]$ define the function
$$\varphi(t):=\sup_{(d,v):~\|d\|= t,\, v\in D(d,d)} ~\|F(d)-v\|.$$
Condition~\ref{eqn:NewtI} will follow immediately once we establish $\varphi'(0)=0$. Clearly,  we may assume $\varphi(t)>0$ for all small $t>0$, since otherwise the equality $\varphi'(0)=0$ holds trivially. 
Lemma~\ref{lem:semialgebraic_select} yields semialgebraic curves $d(\cdot)$ and $v(\cdot)$ satisfying
$\|d(t)\|=t$, $v(t)\in D(d(t)$, $d(t))$ and $\|F(d(t))-v(t)\|\geq \frac{1}{2}\varphi(t)$ for all small $t>0$.
 Observe $d(0)=0$ and $\|d(t)\|/t=1$. Therefore Lemma~\ref{lem:semi_smooth_uni} shows that $d'(0)$ exists and   $\lim_{t \to 0}\dot{d}(t)=d'(0)$.
 
 Note the inclusion $v(t)/t\in D(d(t),\frac{d(t)}{t})$. Local Lipschitz continuity of $D(x,\cdot)$, implies that there exists $L>0$ and a semialgebraic curve $w(t)$ satisfying
 $w(t)\in D(d(t),\dot d(t))$ with 
 $$\left\|w(t)-\frac{v(t)}{t}\right\|\leq L\left\|\frac{d(t)}{t}-\dot d(t)\right\|$$
 for all small $t>0$.
Since the right side tends to zero as $t\searrow 0$, we  compute
\begin{align*}
\varphi'(0)=\lim_{t\searrow 0}\frac{\varphi(t)}{t}&\leq \ls_{t\searrow 0}\frac{2\|F(d(t))-v(t)\|}{t}=2\cdot \ls_{t\searrow 0}\left\|\frac{F(d(t))}{t}-w(t)\right\|.
\end{align*}
Lemma~\ref{lem:semi_smooth_uni} and  condition~\ref{eqn:conserv} guarantee 
\begin{align*}
\lim_{t\searrow 0} w(t)=\lim_{t\searrow 0}(F\circ d)'(t)=\lim_{t\searrow 0}\frac{F(d(t))}{t},
\end{align*}
Note that all the limits in the displayed equation exist due to the local monotonicity of semialgebraic functions. 
 We  conclude $\varphi'(0)=0$ and therefore condition~\ref{eqn:NewtI} holds. The proof of condition~\ref{eqn:NewtII} follows by identical reasoning, while taking into account Lemma~\ref{lemm:directional_equal}.
\end{proof}

Combining Theorem~\ref{thm:cons_newton} with Theorem~\ref{thm:proj_formula} immediately shows that semi-algebraic locally Lipschitz maps are semismooth with respect to the Clarke generalized Jacobian---the main result of \cite{bolte2009tame}. The proof just presented appears to be  simpler and more direct than the original argument in \cite{bolte2009tame}.

\begin{corollary}[Semi-algebraic maps are semismooth]
Any locally Lipschitz semi-algebraic map $F\colon\E\too\R^m$ satisfies condition~\ref{eqn:NewtI} for the map $D(x,u)=\partial_C F(x)u$.
\end{corollary}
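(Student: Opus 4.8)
The plan is to combine Theorem~\ref{thm:cons_newton} with the projection formula (Theorem~\ref{thm:proj_formula}) applied coordinatewise. The key point is that the map $D(x,u) = \partial_C F(x) u$ satisfies Assumption~\ref{ass:standing} and satisfies condition~\ref{eqn:conserv}, so that Theorem~\ref{thm:cons_newton} applies directly.

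First I would check Assumption~\ref{ass:standing} for $D(x,u)=\partial_C F(x)u$. The full-domain and homogeneity conditions are immediate since $\partial_C F(x)$ is a nonempty compact convex set. Lipschitz continuity locally uniformly in $x$ holds because $\partial_C F$ is locally bounded: near any $\bar x$ there is a bound $\|A\|\le L$ for every $A\in\partial_C F(x)$, whence $\dist(\partial_C F(x)u_1,\partial_C F(x)u_2)\le L\|u_1-u_2\|$. Also $F$ is locally Lipschitz and semialgebraic, hence directionally differentiable by Lemma~\ref{lem:semi_smooth_uni}, and $\gph\partial_C F$ is semialgebraic since the Clarke Jacobian is built by semialgebraic operations (closure, limits, convex hull) from $\nabla F$; thus $D$ is semialgebraic as well.

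Second, and this is the crux, I would verify condition~\ref{eqn:conserv} for this $D$. Write $F=(F_1,\dots,F_m)$ and apply Theorem~\ref{thm:proj_formula} to each scalar component $F_i$, together with Theorem~\ref{thm:com_part} to obtain a single common $C^1$-semialgebraic partition $\cA$ of $\R^n$ refining all $m$ partitions. Then for any $M\in\cA$, $x\in M$, and $u\in T_M(x)$ we have $F_i'(x,u)\in\langle\partial_C F_i(x),u\rangle$ for every $i$, and since each row of any $A\in\partial_C F(x)$ lies in $\partial_C F_i(x)$, the directional derivative vector $F'(x,u)$ is a selection of $\partial_C F(x)u$; in fact the projection formula gives the set equality $\{F_i'(x,u)\}=\langle\partial_C F_i(x),u\rangle$, so $\{F'(x,u)\}=\partial_C F(x)u$ whenever $u\in T_M(x)$. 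Now given any absolutely continuous curve $\gamma$, for a.e.\ $t$ the point $\gamma(t)$ lies in some stratum $M\in\cA$ with $\dot\gamma(t)\in T_M(\gamma(t))$ — this is the standard fact that an absolutely continuous curve is, almost everywhere, differentiable with velocity tangent to its stratum (it follows from Theorem~\ref{thm:gen_smooth}/stratification arguments, and the stratum visited has positive measure preimage). At such $t$ the chain rule $\frac{d}{dt}(F\circ\gamma)(t)=F'(\gamma(t),\dot\gamma(t))=\partial_C F(\gamma(t))\dot\gamma(t)=D(\gamma(t),\dot\gamma(t))$ holds, which is exactly \eqref{eqn:def_cons}.

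The main obstacle is the measure-theoretic claim that $\dot\gamma(t)\in T_M(\gamma(t))$ for a.e.\ $t$: one must argue that the set of times $t$ at which $\gamma$ crosses strata transversally, or lands in a stratum of dimension $<n$ with a velocity escaping the tangent space, has measure zero. This is where one invokes that for a stratum $M$ of dimension $k$, the set $\gamma^{-1}(M)$ is measurable and at its density points $\gamma'$ must be tangent to $M$; strata not containing $\gamma(t)$ in their relative interior for a positive-measure set of times contribute nothing. Once condition~\ref{eqn:conserv} is in hand, Theorem~\ref{thm:cons_newton} immediately yields condition~\ref{eqn:NewtI} for $D(x,u)=\partial_C F(x)u$, completing the proof.
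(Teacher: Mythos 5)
Your proposal is correct and follows the same skeleton as the paper's proof: apply the projection formula (Theorem~\ref{thm:proj_formula}) to each coordinate $F_i$, merge the partitions via Theorem~\ref{thm:com_part}, use the row inclusion $\partial_C F(x)\subset\{A: A_i\in\partial_C F_i(x)\}$ to get $\partial_C F(x)u=\{F'(x,u)\}$ for $u\in T_M(x)$, establish conservativity, and invoke Theorem~\ref{thm:cons_newton}. The one place you diverge is exactly the step you flag as "the main obstacle": you verify condition~\ref{eqn:conserv} along \emph{all} absolutely continuous curves, which forces you to prove the measure-theoretic tangency fact that $\dot\gamma(t)\in T_M(\gamma(t))$ for a.e.\ $t$ with $\gamma(t)\in M$. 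Your density-point sketch of that fact is sound (it is the content of the paper's display \eqref{eqn:implication_tan}, cited there to an external lemma), but the paper deliberately avoids it here: the "Moreover" clause of Theorem~\ref{thm:cons_newton} only requires condition~\ref{eqn:conserv} along \emph{semialgebraic} curves, and for a semialgebraic $\gamma$ the set $\gamma^{-1}(M)$ is a finite union of points and intervals, so the tangency of $\dot\gamma(t)$ is immediate on the interiors of those intervals with no measure theory needed. So your route buys the (stronger, and independently true) full conservativity of the Clarke Jacobian at the cost of an extra lemma, while the paper's restriction to semialgebraic curves keeps the corollary self-contained. Your verification of Assumption~\ref{ass:standing} and of semialgebraicity of $\gph\partial_C F$ is diligence the paper omits, and is correct.
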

\begin{proof}
 Note the inclusion $\partial_C F(x)\subset \{A\in \R^{m\times n}: A_i\in \partial_C F_i(x)\}$.
Let $\mathcal{A}_i$ be the partition ensured by Theorem~\ref{thm:proj_formula} for each coordinate function $F_i$. Using Theorem~\ref{thm:com_part}, let $\mathcal{A}$ be a semialgebraic partition that is compatible with each $\mathcal{A}_i$.
Fix a semialgebraic curve $\gamma$. Semialgebraicity implies that for any $M \in \cA$ and a.e.\ $t \in (0, 1)$ with $\gamma(t) \in M$, the inclusion $\dot\gamma(t) \in T_{M}(\gamma(t))$ holds. Consequently, condition \ref{eqn:conserv} holds with respect to all semialgebraic curves.
An application of Theorem~\ref{thm:cons_newton} completes the proof.
\end{proof}

The proof of the full equivalence between conditions~\ref{eqn:NewtI}-\ref{it:clarke_strat} will make use of the following simple linear algebraic fact.

\begin{lemma}\label{lem:line_alg}
For any sets $A,B\subset\E$ and a subspace $V\subset\E$, the equivalence holds:
$$A\subset B+V\qquad \Longleftrightarrow\qquad \proj(A,V^{\perp})\subset\proj(B,V^{\perp}).$$
\end{lemma}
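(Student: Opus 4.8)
\textbf{Proof proposal for Lemma~\ref{lem:line_alg}.}

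The plan is to prove the two implications separately, using the orthogonal decomposition $\E = V \oplus V^{\perp}$ and the fact that $\proj(\cdot, V^{\perp})$ is a linear map which annihilates $V$.

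First I would prove the forward direction. Suppose $A \subset B + V$. Pick any $a \in A$; then $a = b + v$ for some $b \in B$ and $v \in V$. Applying the linear projection $\proj(\cdot, V^{\perp})$ and using that $\proj(v, V^{\perp}) = 0$ since $v \in V$, we get $\proj(a, V^{\perp}) = \proj(b, V^{\perp}) \in \proj(B, V^{\perp})$. Since $a \in A$ was arbitrary, this gives $\proj(A, V^{\perp}) \subset \proj(B, V^{\perp})$.

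For the reverse direction, suppose $\proj(A, V^{\perp}) \subset \proj(B, V^{\perp})$. Pick any $a \in A$. Then $\proj(a, V^{\perp}) \in \proj(B, V^{\perp})$, so there exists $b \in B$ with $\proj(a, V^{\perp}) = \proj(b, V^{\perp})$. Hence $\proj(a - b, V^{\perp}) = 0$, which means $a - b \in V$; equivalently $a = b + (a - b) \in B + V$. Since $a \in A$ was arbitrary, $A \subset B + V$, completing the proof.

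The only point requiring a word of care — and the closest thing to an obstacle here — is the implicit use of uniqueness of the orthogonal projection onto a subspace (so that $\proj(\cdot, V^{\perp})$ is genuinely single-valued and linear, and $\proj(w, V^{\perp}) = 0 \iff w \in V$). This is standard for closed subspaces of a Euclidean space, so the argument is essentially immediate; no stratification or semialgebraic input is needed.
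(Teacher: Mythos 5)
Your proof is correct and is the standard argument; the paper actually states this lemma without proof, and your two-implication argument via the linearity of $\proj(\cdot,V^{\perp})$ and the identity $\ker\proj(\cdot,V^{\perp})=V$ is exactly the intended elementary justification.
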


We now have all the ingredients to prove the main the result of the paper.

\begin{theorem}[Equivalence]\label{thm:equiv_stat}
Suppose that $F$ and $D$ are semialgebraic and that Assumption~\ref{ass:standing} holds. Then conditions~\ref{eqn:NewtI}-\ref{it:clarke_strat} are equivalent.
\end{theorem}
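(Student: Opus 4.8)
The strategy is to establish a cycle of implications among the five conditions, using the already-proven Theorem~\ref{thm:cons_newton} ($\ref{eqn:conserv}\Rightarrow\ref{eqn:NewtI},\ref{eqn:NewtII}$) and the literature equivalence $\ref{eqn:conserv}\Leftrightarrow\ref{it:clarke_strat}$ as given arcs. A natural closed chain is
$$\ref{eqn:strat_der}\ \Rightarrow\ \ref{eqn:conserv}\ \Rightarrow\ \ref{eqn:NewtI}\ \Rightarrow\ \ref{eqn:NewtII}\ \Rightarrow\ \ref{it:clarke_strat}\ \Rightarrow\ \ref{eqn:strat_der},$$
possibly reorganized so that the two semismoothness conditions sit symmetrically. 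The easy arcs are $\ref{eqn:strat_der}\Rightarrow\ref{eqn:conserv}$ and $\ref{it:clarke_strat}\Rightarrow\ref{eqn:strat_der}$, each of which is essentially a stratification argument. For $\ref{eqn:strat_der}\Rightarrow\ref{eqn:conserv}$: given an absolutely continuous curve $\gamma$, one first reduces (by a covering/measurability argument on $[0,1]$, or by restricting to semialgebraic curves and invoking Theorem~\ref{thm:cons_newton}'s final sentence) to the case where $\gamma$ stays in a single manifold $M\in\cA$ on a subinterval; then for a.e.\ $t$ one has $\dot\gamma(t)\in T_M(\gamma(t))$, so $D(\gamma(t),\dot\gamma(t))=\{F'(\gamma(t),\dot\gamma(t))\}=\{\tfrac{d}{dt}(F\circ\gamma)(t)\}$ by the chain rule for $C^1$ restrictions plus the first-order approximation \eqref{eqn:b_der}. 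For $\ref{it:clarke_strat}\Rightarrow\ref{eqn:strat_der}$: on the partition $\cA$ from \ref{it:clarke_strat}, refine using Theorem~\ref{thm:proj_formula} applied to each $F_i$ and Theorem~\ref{thm:com_part}; then for $x\in M$ and $u\in T_M(x)$, the projection formula gives $\langle\partial_C F_i(x),u\rangle=\{F_i'(x,u)\}$, hence $J(x)u$ restricted to directions $u\perp N_M(x)$ collapses to the single vector $F'(x,u)$, so $D(x,u)\subset J(x)u=\{F'(x,u)\}$; combined with homogeneity and the fact that $D(x,u)$ is nonempty, this forces equality.

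The remaining work is to close the loop from a semismoothness condition back to one of the stratified conditions, i.e.\ an arc such as $\ref{eqn:NewtII}\Rightarrow\ref{it:clarke_strat}$ (or $\ref{eqn:NewtI}\Rightarrow\ref{eqn:strat_der}$). This I expect to be the main obstacle. The idea: start from a $C^1$-semialgebraic partition $\cA$ fine enough that $F|_M$ is $C^1$ for each $M$ (Theorem~\ref{thm:gen_smooth}), that $D$ is generically inner-semicontinuous on each stratum (Theorem~\ref{thm:inner_semicont}), that the projection formula holds for each $F_i$ (Theorem~\ref{thm:proj_formula}), and compatible with the partitions from all these results (Theorem~\ref{thm:com_part}); also refine so that for each pair $M,M'$ the ``tangential'' behavior is controlled. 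Fix $x\in M$ and a tangent direction $u\in T_M(x)$. One wants to extract, from the semismoothness estimate, information about $D(x,u)$. Choose a semialgebraic $C^1$ curve $\gamma$ in $M$ with $\gamma(0)=x$, $\gamma'(0)=u$; then semismoothness~\ref{eqn:NewtII} along the sequence $y=\gamma(t)$, $x$ fixed, combined with the first-order expansion \eqref{eqn:b_der} of $F$ along $\gamma$, yields that every element $v\in D(\gamma(t),\gamma(0)-\gamma(t))$ satisfies $v = F(x)-F(\gamma(t)) + o(t) = -t F'(x,u)+o(t)$; dividing by $t$, using homogeneity $D(\gamma(t),\gamma(0)-\gamma(t)) = t\cdot D(\gamma(t),\tfrac{\gamma(0)-\gamma(t)}{t})$ and that $\tfrac{\gamma(0)-\gamma(t)}{t}\to -u$ with Lipschitzness of $D(y,\cdot)$, one gets $\mathrm{dist}(-F'(x,u), D(\gamma(t),-u))\to 0$; then inner-semicontinuity of $D$ restricted to $M$ (applied at the basepoint $x$, along the sequence $\gamma(t)\to x$ within $M$) transfers this to $D(x,-u)$, giving $-F'(x,u)\in D(x,-u)$, equivalently $F'(x,u)\in -D(x,-u)$. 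To upgrade a single-point membership to the full equality $D(x,u)=\{F'(x,u)\}$ one uses, first, Lemma~\ref{lemm:directional_equal}-type symmetry together with a ``reverse'' curve argument to also bound $D(x,u)$, and second, a dimension/selection argument: if $D(x,u)$ contained a second value on a positive-measure set of $(x,u)$ with $x\in M$, $u\in T_M(x)$, one derives a contradiction with the semismooth estimate along an appropriate curve hitting that bad set.

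An alternative and possibly cleaner route to this hard arc is indirect: show $\ref{eqn:NewtI}\Rightarrow\ref{eqn:conserv}$ with respect to semialgebraic curves. Given a semialgebraic absolutely continuous $\gamma\colon[0,1]\to\R^n$, which is $C^1$ on some $(0,\epsilon)$ by Lemma~\ref{lem:semi_smooth_uni}, fix $t_0\in(0,\epsilon)$ and apply \ref{eqn:NewtI} along $y=\gamma(t_0+s)$, $x=\gamma(t_0)$, $s\searrow 0$: then $F(\gamma(t_0+s))-F(\gamma(t_0)) - D(\gamma(t_0+s),\gamma(t_0+s)-\gamma(t_0))\in o(s)\mathbf{B}$; dividing by $s$, using homogeneity and local Lipschitzness of $D(y,\cdot)$ to replace $\tfrac{\gamma(t_0+s)-\gamma(t_0)}{s}$ by $\dot\gamma(t_0)$ up to $o(1)$, and using generic inner-semicontinuity so that $\mathrm{dist}(D(\gamma(t_0+s),\dot\gamma(t_0)),D(\gamma(t_0),\dot\gamma(t_0)))\to 0$, one obtains $\tfrac{d}{dt}(F\circ\gamma)(t_0)\in D(\gamma(t_0),\dot\gamma(t_0))$; the reverse inclusion (that $D$ is single-valued there with that value) follows because $D(\gamma(t_0),\dot\gamma(t_0))\subset J(\gamma(t_0))\dot\gamma(t_0)$ would already be known, or more directly by running the same argument with $s\nearrow 0$ and invoking Lemma~\ref{lemm:directional_equal}. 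This gives $\ref{eqn:NewtI}\Rightarrow\ref{eqn:conserv}$ (for semialgebraic curves, which by the cited equivalence $\ref{eqn:conserv}\Leftrightarrow\ref{it:clarke_strat}$ and by $\ref{it:clarke_strat}\Rightarrow\ref{eqn:strat_der}\Rightarrow\ref{eqn:conserv}$ is enough to close everything), and symmetrically $\ref{eqn:NewtII}\Rightarrow\ref{eqn:conserv}$ using Lemma~\ref{lemm:directional_equal}. Either way, the crux is the same: converting a pointwise $o(\|x-y\|)$ semismooth estimate into an exact a.e.\ chain rule, which requires the generic regularity (inner-semicontinuity, genericity of smoothness) of semialgebraic maps to kill the error terms — that is the step I expect to demand the most care.
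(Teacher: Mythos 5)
Your overall architecture---a cycle through the five conditions with Theorem~\ref{thm:cons_newton} supplying $\ref{eqn:conserv}\Rightarrow\ref{eqn:NewtI},\ref{eqn:NewtII}$, the projection formula (Theorems~\ref{thm:proj_formula} and~\ref{thm:com_part}) handling the passage between \ref{eqn:strat_der} and \ref{it:clarke_strat}, the a.e.\ tangency $\dot\gamma(t)\in T_M(\gamma(t))$ giving the return to \ref{eqn:conserv}, and the two semismoothness conditions treated symmetrically---is essentially the paper's. One cosmetic point first: the arc $\ref{eqn:NewtI}\Rightarrow\ref{eqn:NewtII}$ in your displayed chain cannot be proved directly (the two conditions evaluate $D$ at $y$ in opposite directions and no symmetry of $D$ is assumed); the correct reorganization, which your hedge presumably intends, is to prove $\ref{eqn:NewtII}\Rightarrow\ref{eqn:strat_der}$ once and then apply it to $\hat D(x,u):=-D(x,-u)$, for which condition \ref{eqn:NewtII} coincides with condition \ref{eqn:NewtI} for $D$.

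The substantive gap is in the hard arc: you deploy inner semicontinuity in the wrong direction. Inner semicontinuity of $x\mapsto\gph D(x,\cdot)$ on $M$ lets you take a \emph{given} pair $(u,v)\in\gph D(x,\cdot)$ at the basepoint and produce approximating pairs $(u_i,v_i)\in\gph D(x_i,\cdot)$ at nearby points $x_i\xrightarrow[]{M}x$; it does not ``transfer'' the limiting value $-F'(x,u)$ of elements of $D(\gamma(t),\cdot)$ back into $D(x,-u)$---that would be outer semicontinuity, which is not assumed, and even granting it you would obtain only the single membership $-F'(x,u)\in D(x,-u)$ rather than the required equality $D(x,-u)=\{-F'(x,u)\}$. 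This is precisely why you find yourself needing an ``upgrade'' step, and the proposed dimension/selection argument for it is not a proof. The repair is to run the argument in the opposite direction, as the paper does: fix an \emph{arbitrary} $v\in D(x,u)$ with $u\in T_M(x)$ a unit vector; choose $x_i\xrightarrow[]{M}x$ with $(x-x_i)/\|x-x_i\|\to u$ (approach $x$ from the tangent direction $-u$); use inner semicontinuity to produce $v_i\in D(x_i,u_i)$ with $(u_i,v_i)\to(u,v)$; then the semismooth estimate \ref{eqn:NewtII} for the pair $(x,x_i)$, combined with homogeneity and the locally uniform Lipschitz continuity of $D(x_i,\cdot)$, forces $v_i/\|u_i\|\to F'(x,u)$, whence $v=F'(x,u)$. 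Since $v$ was arbitrary and $D(x,u)\neq\emptyset$, this yields $D(x,u)=\{F'(x,u)\}$ in one pass, with no upgrade step. Your alternative route ($\ref{eqn:NewtI}\Rightarrow\ref{eqn:conserv}$ along semialgebraic curves) has the same flaw: pulling the estimate at $\gamma(t_0+s)$ back to $D(\gamma(t_0),\dot\gamma(t_0))$ again requires pushing an arbitrary element of the latter forward by inner semicontinuity, not the reverse.
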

\begin{proof}
We first establish the equivalences $\ref{eqn:NewtII}\Leftrightarrow\ref{eqn:conserv}\Leftrightarrow \ref{eqn:strat_der} \Leftrightarrow\ref{it:clarke_strat}$ by verifying the implications $\ref{eqn:conserv}\Rightarrow\ref{eqn:NewtII}\Rightarrow \ref{eqn:strat_der}\Rightarrow\ref{it:clarke_strat}\Rightarrow\ref{eqn:conserv}$. 

\smallskip
\noindent{\textrm{\em Implication $\ref{eqn:conserv}\Rightarrow  \ref{eqn:NewtII}$:}}
This was proved in Theorem~\ref{thm:cons_newton}.

\smallskip
\noindent{\textrm{\em Implication $\ref{eqn:NewtII}\Rightarrow \ref{eqn:strat_der}$}:}
Theorems~\ref{thm:gen_smooth} and \ref{thm:inner_semicont} yield a  partition $\cA$ of $\R^n$ into finitely many semialgebraic $C^1$ manifolds such that when restricted to  each manifold $M\in \cA$, the function $F$ is $C^1$-smooth and the map 
$$x\mapsto \gph D(x,\cdot)$$ is inner semicontinuous.
 Fix a manifold $M\in \cA$, a point $x\in M$, unit tangent vector $u\in T_M(x)$, and $v\in D(x,u)$.  Since $-u$ also lies in $T_M(x)$, we may find sequences $x_i \xrightarrow[]{M} x$  and $\tau_i\searrow 0$  with $\tau_i^{-1}(x_i-x)\to -u$. By inner-semicontinuity, there exist  
 sequences $(u_i,v_i)$ converging to $(u,v)$ and satisfying $v_i\in D(x_i,u_i)$. The assumed condition~\ref{eqn:NewtII} therefore guarantees
 $$\{0\}=\LS_{i\to 0}~ \frac{F(x_i)-F(x)+D(x_i,x-x_i)}{\|x_i-x\|}.$$ 
Rearranging and using linearity of $F'(x,\cdot)$ on $T_M(x)$ we deduce
 \begin{equation}\label{eqn:continue_dis}
 \{F'(x,u)\}=\{-F'(x,-u)\}=\LS_{i\to \infty}~D\left(x_i,\frac{x-x_i}{{\|x-x_i\|}}\right).
 \end{equation}
 Taking into account local Lipschitz continuity of  $D(y,\cdot)$ uniformly for all $y$ near $x$, we deduce that there exists $L>0$ and a sequence $z_i\in D\left(x_i,\frac{x-x_i}{{\|x-x_i\|}}\right)$  satisfying 
 $$\left\|z_i-\frac{v_i}{\|u_i\|}\right\|\leq L \left\|\frac{x-x_i}{\|x-x_i\|}-\frac{u_i}{\|u_i\|}\right\|,$$
 for all large indices $i$. Observe that the right side tends to zero. Continuing \eqref{eqn:continue_dis}, we deduce 
 $$ \{F'(x,u)\}=\lim_{i\to\infty} z_i=\lim_{i\to\infty} \frac{v_i}{\|u_i\|}=\{v\}.$$
We have thus proved $D(x,u)=\{F'(x,u)\}$ for all $u\in T_M(x)$, and therefore  \ref{eqn:strat_der} holds.

\smallskip
\noindent{\textrm{\em Implication $\ref{eqn:strat_der}\Rightarrow\ref{it:clarke_strat}$:}} Let $\cA$ be the partition of $\R^n$ into $C^1$ semialgebraic manifolds stipulated by condition \ref{eqn:strat_der}.
Using Theorem~\ref{thm:proj_formula} for each coordinate function $F_i$ and refining the partitions using Theorem~\ref{thm:com_part}, we arrive at a finite partition $\cA'$ of $\R^n$ into $C^1$ semialgebraic manifolds that is compatible  with $\cA$ and satisfies
$$\{F'_i(x,u)\}=\langle \partial_C F_i(x),u\rangle,$$
whenever $x$ lies in $M\in\cA'$ and $u\in T_M(x)$ is a tangent vector. Since for any such vector $u$, equality $\{F'(x,u)\}=D(x,u)$ holds by $\ref{eqn:strat_der}$, we conclude
$$D(x,u)=J(x)u\qquad \forall u\in T_M(x).$$
On the other hand, for any $u\notin T_M(x)$, we trivially have
$$J_i(x)u=\langle \partial_C F_i(x),u\rangle  +\langle N_M(x),u\rangle=\langle \partial_C F_i(x),u\rangle+\R=\R,$$
where $J_i(x)$ denotes the set of all $i$'th rows of matrices in $J(x)$. Therefore, in this case, the inclusion $D(x,u)\subset J(x)u$ holds trivially.

\smallskip
\noindent{\textrm{\em Implication $\ref{it:clarke_strat}\Rightarrow\ref{eqn:conserv}$:}}
Let $\cA$ be the partition of $\R^n$ into $C^1$ semialgebraic manifolds stipulated by condition \ref{it:clarke_strat}. Refining the partition using Theorems~\ref{thm:proj_formula} and~\ref{thm:com_part}, we may ensure that the equality holds:
$$\{F'(x,u)\}=J(x)u  \qquad \forall u\in T_M(x),$$
whenever $x$ lies in some manifold $M\in \cA$.

Consider now any absolutely continuous curve $\gamma\colon [0,1]\to\E$. It is elementary to verify  that for a.e. $t\in (0,1)$ the implication holds (e.g. \cite[Lemma 4.13]{DIL}):
\begin{equation}\label{eqn:implication_tan}
\gamma(t)\in M\quad \Longrightarrow \quad \dot{\gamma}(t)\in T_{M}(\gamma(t)).
\end{equation}
We conclude that for  a.e. $t\in (0,1)$ and $M\in\cA$ satisfying $\gamma(t)\in M$, we have
$$\left\{\frac{d}{dt}(F\circ \gamma)(t)\right\}=\{F'(\gamma(t),\dot{\gamma}(t))\}=J(\gamma(t))\dot\gamma(t)\supset D(\gamma(t),\dot{\gamma}(t)),$$
as claimed.

Summarizing, we have proved the equivalence $\ref{eqn:NewtII}\Leftrightarrow\ref{eqn:conserv}\Leftrightarrow \ref{eqn:strat_der}\Leftrightarrow\ref{it:clarke_strat}$. Next,  observe that \ref{it:clarke_strat} holds for a map $D(\cdot,\cdot)$ if and only if it holds for the map $\hat D(x,u):=-D(x,-u)$. Noting that 
 condition \ref{eqn:NewtII} for $\hat D$ coincides with condition \ref{eqn:NewtI}   for $D$ completes the proof of the theorem.
\end{proof}

\bibliographystyle{plain}
\bibliography{bibliography}

\begin{thebibliography}{10}

\bibitem{bolte2009tame}
J.~Bolte, A.~Daniilidis, and A.~Lewis.
\newblock Tame functions are semismooth.
\newblock {\em Math. Prog.}, 117(1-2):5--19, 2009.

\bibitem{Lewis-Clarke}
J.~Bolte, A.~Daniilidis, A.S. Lewis, and M.~Shiota.
\newblock {C}larke subgradients of stratifiable functions.
\newblock {\em SIAM J. Optim.}, 18(2):556--572, 2007.

\bibitem{bolte2020conservative}
J.~Bolte and E.~Pauwels.
\newblock Conservative set valued fields, automatic differentiation, stochastic
  gradient methods and deep learning.
\newblock {\em Math. Prog.}, pages 1--33, 2020.

\bibitem{bolte2020mathematical}
J.~Bolte and E.~Pauwels.
\newblock A mathematical model for automatic differentiation in machine
  learning.
\newblock {\em arXiv:2006.02080}, 2020.

\bibitem{CLSW}
F.H. Clarke, Yu. Ledyaev, R.I. Stern, and P.R. Wolenski.
\newblock {\em Nonsmooth Analysis and Control Theory}.
\newblock Texts in Math. 178, Springer, New York, 1998.

\bibitem{Coste-min}
M.~Coste.
\newblock {\em An introduction to o-minimal geometry}.
\newblock RAAG Notes, 81 pages, Institut de Recherche Math\'{e}matiques de
  Rennes, November 1999.

\bibitem{Coste-semi}
M.~Coste.
\newblock {\em An {I}ntroduction to {S}emialgebraic {G}eometry}.
\newblock RAAG Notes, 78 pages, Institut de Recherche Math\'{e}matiques de
  Rennes, October 2002.

\bibitem{daniilidis2020pathological}
A.~Daniilidis and D.~Drusvyatskiy.
\newblock Pathological subgradient dynamics.
\newblock {\em SIAM J. Optim.}, 30(2):1327--1338, 2020.

\bibitem{davis2020stochastic}
D.~Davis, D.~Drusvyatskiy, S.~Kakade, and J.D. Lee.
\newblock Stochastic subgradient method converges on tame functions.
\newblock {\em Foundations of computational mathematics}, 20(1):119--154, 2020.

\bibitem{DIL}
D.~Drusvyatskiy, A.D. Ioffe, and A.S. Lewis.
\newblock Curves of descent.
\newblock {\em To appear in SIAM J. Control and Optim.}, 2015.

\bibitem{dim}
D.~Drusvyatskiy and A.S. Lewis.
\newblock Semi-algebraic functions have small subdifferentials.
\newblock {\em Math. Program.}, 140(1):5--29, 2012.

\bibitem{ioffe_book}
A.D. Ioffe.
\newblock {\em Variational analysis of regular mappings}.
\newblock Springer Monographs in Mathematics. Springer, Cham, 2017.
\newblock Theory and applications.

\bibitem{kk_book}
D.~Klatte and B.~Kummer.
\newblock {\em Nonsmooth equations in optimization}, volume~60 of {\em
  Nonconvex Optimization and its Applications}.
\newblock Kluwer Academic Publishers, Dordrecht, 2002.
\newblock Regularity, calculus, methods and applications.

\bibitem{lewis2021structure}
A.~Lewis and T.~Tian.
\newblock The structure of conservative gradient fields.
\newblock {\em arXiv:2101.00699}, 2021.

\bibitem{majewski2018analysis}
S.~Majewski, B.~Miasojedow, and E.~Moulines.
\newblock Analysis of nonsmooth stochastic approximation: the differential
  inclusion approach.
\newblock {\em arXiv preprint arXiv:1805.01916}, 2018.

\bibitem{mifflin1977semismooth}
R.~Mifflin.
\newblock Semismooth and semiconvex functions in constrained optimization.
\newblock {\em SIAM J. Control and Optim.}, 15(6):959--972, 1977.

\bibitem{Mord_1}
B.S. Mordukhovich.
\newblock {\em Variational Analysis and Generalized Differentiation I: Basic
  Theory}.
\newblock Grundlehren der mathematischen Wissenschaften, Vol 330, Springer,
  Berlin, 2006.

\bibitem{norkin1980generalized}
VI~Norkin.
\newblock Generalized-differentiable functions.
\newblock {\em Cybernetics}, 16(1):10--12, 1980.

\bibitem{norkin1986stochastic}
VI~Norkin.
\newblock Stochastic generalized-differentiable functions in the problem of
  nonconvex nonsmooth stochastic optimization.
\newblock {\em Cybernetics}, 22(6):804--809, 1986.

\bibitem{penot_book}
J.-P. Penot.
\newblock {\em Calculus without derivatives}, volume 266 of {\em Graduate Texts
  in Mathematics}.
\newblock Springer, New York, 2013.

\bibitem{qi1993nonsmooth}
L.~Qi and J.~Sun.
\newblock A nonsmooth version of newton's method.
\newblock {\em Mathematical programming}, 58(1-3):353--367, 1993.

\bibitem{RW98}
R.T. Rockafellar and R.J-B. Wets.
\newblock {\em Variational {A}nalysis}.
\newblock Grundlehren der mathematischen Wissenschaften, Vol 317, Springer,
  Berlin, 1998.

\bibitem{ruszczynski2020convergence}
A.~Ruszczy{\'n}ski.
\newblock Convergence of a stochastic subgradient method with averaging for
  nonsmooth nonconvex constrained optimization.
\newblock {\em Optimization Letters}, pages 1--11, 2020.

\bibitem{shapiro1990concepts}
A.~Shapiro.
\newblock On concepts of directional differentiability.
\newblock {\em Journal of optimization theory and applications},
  66(3):477--487, 1990.

\bibitem{Dries-Miller96}
L.~van~den Dries and C.~Miller.
\newblock Geometric categories and o-minimal structures.
\newblock {\em Duke Math. J.}, 84:497--540, 1996.

\end{thebibliography}

\end{document}